\documentclass[11pt,a4paper]{amsart}
\usepackage{amsmath}
\usepackage{amsfonts}
\usepackage{graphicx}
\usepackage{framed}
\usepackage{amssymb}
\usepackage{esvect}

\usepackage{etex}
\usepackage{latexsym}
\usepackage[english]{babel}
\usepackage[utf8x]{inputenc}
\usepackage{xcolor}

\def \N {\mathbb{N}}
\def \R {\mathbb{R}}

\def \d {\mathrm{d}}

\def \longto {\longrightarrow}
\def \d {\mathrm{d}}

\def \FF {\mathcal{F}}
\def \Tv {\mathrm{T}}

\def \de {\partial}

\def \HH {\mathcal{H}}

\usepackage{amsthm}
\theoremstyle{definition}
\newtheorem{definition}{Definition}[section]
\newtheorem*{axiom}{Assumptions}
\newtheorem{example}[definition]{Example}
\newtheorem{remark}[definition]{Remark}

\theoremstyle{plain}

\newtheorem{theorem}[definition]{Theorem}
\newtheorem{proposition}[definition]{Proposition}

\numberwithin{equation}{section}
\textwidth = 14cm
\begin{document}
 \title[Pohozaev-type identities]{Pohozaev-type identities for differential operators driven by homogeneous vector fields}
 
\author[S.\ Biagi]{Stefano Biagi}
\author[A.\ Pinamonti]{Andrea Pinamonti}
\author[E.\ Vecchi]{Eugenio Vecchi}

\address[S.\,Biagi]{Politecnico di Milano
 \newline\indent Dipartimento di Matematica \newline\indent
 Via Bonardi 9, 20133 Milano, Italy}
 \email{stefano.biagi@polimi.it}
 
 \address[A.\ Pinamonti]{Dipartimento di Matematica \newline\indent
	Universit\`a degli Studi di Trento,
	Via Sommarive 14, 38123, Povo (Trento), Italy}
\email{andrea.pinamonti@unitn.it}
 
 \address[E.\,Vecchi]{Politecnico di Milano
 \newline\indent Dipartimento di Matematica \newline\indent
 Via Bonardi 9, 20133 Milano, Italy}
 \email{eugenio.vecchi@polimi.it}


\subjclass[2010]
{}

\keywords{sub-elliptic semilinear equations; non-existence results; Pohozaev-type identities; geometric methods for boundary-value problems.} 
 
 \thanks{The authors are members of INdAM. S. Biagi
is partially supported by the INdAM-GNAMPA 2020 project 
\emph{Metodi topologici per problemi al contorno associati a certe 
classi di equazioni alle derivate parziali}.
A. Pinamonti and E. Vecchi are partially supported
by the INdAM-GNAMPA 2020 project 
\emph{Convergenze variazionali per funzionali e operatori dipendenti da campi vettoriali}}.

 \date{\today}
 \begin{abstract}
 We prove Pohozaev-type identities for smooth solutions of Euler-La\-gran\-ge equations of 
 second and fourth order that arise from functional de\-pen\-ding on homogeneous 
 H\"{o}rmander vector fields. We then exploit such integral 
 identities to prove non-existence results for the associated boundary value problems.
 \end{abstract}
  \maketitle
 \section{Introduction}
 In 1965 in \cite{Poho} Pohozaev proved an integral identity 
 for solutions of the following elliptic boundary value problem:
 \begin{equation}\label{eq:POHO}
 \left\{\begin{array}{rl}
  -\Delta u = f(u) & \textrm{in } \Omega,\\
  u=0 & \textrm{on } \partial \Omega,
  \end{array}\right.
 \end{equation}
 \noindent where $\Omega \subset \mathbb{R}^n$ is an open and bounded 
 set with smooth boundary $\partial\Omega$.
 In particular, under the geometric assumption on the set 
 $\Omega$ of being star-shaped, he was able to show that \eqref{eq:POHO} does not admit non-trivial  
 solutions in $C^2(\Omega) \cap C^1(\overline{\Omega})$.
 In 1986 in \cite{PucSer}. Pucci and Serrin extended the approach of Pohozaev proving integral identities 
 for solutions of a large class of variational PDEs 
 coming from functionals possibly depending on the Hessian. 
 
 The starting idea of Pohozaev goes actually 
 back to Rellich and Nehari \cite{Rellich,Nehari} and can be 
 summarized as follows: given a sufficiently smooth 
 solution of \eqref{eq:POHO}, it suffices to multiply  the equation 
 $-\Delta u = f(u)$ by $x \cdot \nabla u$, integrate over $\Omega$ 
 and apply the Divergence Theorem. This will lead to the celebrated Pohozaev identity
 \begin{equation}\label{eq:POHO2}
  \dfrac{n-2}{2} \int_{\Omega}\|\nabla u\|^2 \,\d x - n \int_{\Omega}F(u)\,\d x 
  + \dfrac{1}{2}\int_{\partial   
  \Omega}\left| \dfrac{\partial u}{\partial\nu}\right|^2 \, 
  \langle x ,\nu\rangle\,\d x = 0,
\end{equation}
 \noindent where $F$ is a primitive of $f$ and $\nu$ denotes the unit outward normal of $\partial\Omega$.
 From \eqref{eq:POHO2} it is pretty easy to get nonexistence results (of non-trivial solutions) under 
 appropriate assumptions on the nonlinearity $f$, and therefore on $F$ itself. 
 Assume, e.g., that $F(u)\leq 0$. Then
 \begin{equation*}
 0 \geq n\int_{\Omega} F(u)\,\d x = \dfrac{n-2}{2} \int_{\Omega}\|\nabla u\|^2\,\d x 
 + \dfrac{1}{2} 
 \int_{\partial \Omega}\left| \dfrac{\partial u}{\partial\nu}\right|^2 \,
 \langle x, \nu \rangle\,\d x.
 \end{equation*}
 \noindent 
 Since
 the first integral on the right hand side 
 (r.h.s., in  
 short) is non-negative, it is now clear to the entire sign of 
 the r.h.s.\,depends on $x \cdot \nu$; the latter is 
 a purely geometric quantity, only depending on the set $\Omega$ and not 
 on the solution $u$. Therefore, 
 assuming for example that $\Omega$ is star-shaped, one can achieve 
 the famous non-existence result of Pohozaev.\medskip

 Since the paper of Pucci and Serrin \cite{PucSer}, the intimate 
 connection between integral identities of   
 Pohozaev-type and non-existence results has been the object of 
 study of many papers, who has extended the 
 ideas previously recalled to cover an always wider class of PDEs. 
 The streamline has been definitely interested in extending Pohozaev’s 
 results to more general equations, such as 
 quasi-linear elliptic equations, polyharmonic equations and fractional 
 differential equations.   Without any attempt of completeness, we refer to 
 \cite{Guedda, Dinca} for the case of the $p$-laplacian, to \cite{Mit} for 
 higher order differential operators and to \cite{FallWeth, RosSerra} for 
 more recent contributions dealing with nonlocal operators.
 We must however remind 
 that there has been a certain interest also in studying non-existence results in  
 case of
 more general domains, see e.g. \cite{Dancer, McGough, Cowan} and the 
 references therein.  
 We also refer to \cite{Wagner, XuLuWang} for slightly different approaches to the proof of the classical 
 Pohozaev identity previously recalled.
 
 Another interesting line of research moved to consider non-Euclidean ambient spaces, 
 like Riemannian manifolds (see e.g. \cite{BozMit,BozMit2}) and Carnot groups, which are the prototypical 
 examples of sub-Riemannian manifolds. In this setting Pohozaev-type 
 identities, and the related non-existence reusults for certain classes 
 of semilinear subelliptic PDEs, have been established 
 in \cite{GarLanc, GarShen, GarVas, Niu}. Among several technical issues to 
 be faced in this setting, we want to stress that one has to understand how 
 to replace the star-shape assumption that naturally appears in the Euclidean case. 
 In this perspective, in \cite{GarLanc}  the authors introduced the 
 notion of $\delta_\lambda$-star-shaped set, which is closely related 
 to the anisotropic dilations defined on Carnot groups. 
 Roughly speaking, this choice allows to {\it give a sign} to the 
 quantity that naturally replaces the term $x\cdot \nu$. We address the interested reader to 
 \cite{DF,FP,DGS} for further comments on the 
 notion of star-shaped and some applications.\medskip

 The aim of this note is to continue along the line tracked in \cite{KogLanc}, where the authors proved 
 similar results for a class of differential 
 operators called $\Delta_{\lambda}$ laplacians.
 In particular, we will focus on sufficiently 
 regular solutions of second order variational PDEs that are Euler-Lagrange equations associated to 
 functionals depending on homogeneous H\"{o}rmander vector 
 fields (see Section \ref{sec.prelim} for the details). 
 The setting of the homogeneous H\"ormander vector fields
 has been studied in the series of papers 
 \cite{BGibbons, BBHeat, BBFundSol, BiBoBra, BiBoBra2, BiBra, BiLa}, 
 where several 
 \emph{global results}
 are established: in fact, this setting seems to 
 allow the possibility of developing an interesting global theory, without
 assuming the existence of a group of translations.
  A similar context has 
 been also exploited in \cite{P} to study multiplicity results for 
 solutions of possibly degenerate equation in divergence form and in \cite{MPSC1,MPSC2,MV} to study the converge of minimizers for integral functionals. \medskip
 
Following the spirit of \cite{PucSer}
we will also deduce a Pohozaev-type identity 
for smooth solutions of variational higher-order PDEs. 
To be more precise, we will start by considering functionals depending on
 the intrinsic $X$-Hessian as well, and this choice naturally lead to 
 fourth-order PDEs. As already mentioned, such kind of integral identities should
  be the good tool to study non-existence of non-trivial solutions for
  boundary value problems. Quite surprisingly, to conclude this
   kind of non-existence results it seems necessary to require 
   that the {\it full} Euclidean gradient
    vanishes on $\partial \Omega$, and not only the more intrinsic $X$-gradient 
    (see Section \ref{sec.secondorder} for a more detailed comment
    on this aspect). We believe
     that this feature is strictly related to the geometric assumption of 
     $\delta_{\lambda}$-star-shaped set.
     As a part of a future research
     project, we shall investigate more general geometric assumptions
     on $\Omega$, possibly allowing us to deal with boundary conditions
     only involving the intrinsic $X$-gradient.\bigskip

 \noindent \emph{Plan of the paper.} A short plan of the paper is now in order.
 \begin{itemize}
 \item In Section \ref{sec.prelim} 
 we recall and discuss all the basic notions needed in the proof of our main theorem
 
 \item Section \ref{sec.Orderone} is devoted to the proof of the Pohozaev identity 
 for the second order case and its application to some non-existence results.
 
 \item Finally, in Section \ref{sec.secondorder} we will comment on the case in which 
  our operator depends also on the $X$-Hessian matrix.
 \end{itemize}

 \section{Main assumptions} \label{sec.prelim}
  Throughout the sequel, we denote by
  $\mathcal{X}(\R^n)$ the Lie algebra of the smooth vector fields
  in $\R^n$. Moreover,
  if $A\subseteq\mathcal{X}(\R^n)$, we let
  $\mathrm{Lie}(A)$
  be the smallest Lie sub-algebra of $\mathcal{X}(\R^n)$ containing $A$.
  Finally, if $Y\in\mathcal{X}(\R^n)$ is of the form
  $$Y = \sum_{i = 1}^na_j(x)\frac{\de }{\de x_i}\qquad
  (\text{for some $a_1,\ldots,a_n\in C^\infty(\R^n)$}),$$
  and if $x\in\R^n$ is arbitrary, we define
  $$Y(x) := \begin{pmatrix}
   a_1(x) \\
   \vdots \\
   a_n(x)
  \end{pmatrix} \in \R^n.
  $$
  \begin{axiom}
   Let $X := \{X_1,\ldots,X_m\}\subseteq\mathcal{X}(\R^n)$ be a family of
   \emph{linearly in\-de\-pen\-dent} smooth vector fields in $\R^n$
   satisfying the following assumptions.
   \begin{description}
    \item[(H.1)] $X_1,\ldots,X_m$ are homogeneous of degree $1$
    with respect to a family of non-isotropic dilations $\{\delta_\lambda\}_{\lambda > 0}$
    in $\R^n$ of the form
    \begin{equation} \label{eq.defidela}
     \delta_\lambda(x)
    := (\lambda^{\sigma_1}x_1,\ldots,\lambda^{\sigma_n}x_n),\quad
    \begin{array}{c}
    \text{where $\sigma_1,\ldots,\sigma_n\in\N$ and} \\
    1 = \sigma_1\leq\ldots\leq\sigma_n.
    \end{array}
    \end{equation}
    We define  the $\delta_\lambda$-homogeneous
    dimension of $\R^n$ as 
    \begin{equation} \label{eq.defq}
     q := \sum_{k = 1}^n\sigma_k\geq n
    \end{equation}
    
    \item[(H.2)] $X_1,\ldots,X_m$ satisfy H\"ormander's condition at $x = 0$, that is,
    \begin{equation} \label{eq.HormanderX}
      \mathrm{dim}\Big\{Y(0):\,Y\in\mathrm{Lie}(X)\Big\} = n.
    \end{equation}
   \end{description}
  \end{axiom}
  As regards assumption (H.1), we remind that a vector field
  $Y\in\mathcal{X}(\R^n)$ is ho\-mo\-ge\-neous of degree $\alpha \in\R$
  with respect to $\{\delta_\lambda\}_{\lambda > 0}$ if
  \begin{equation} \label{eq.defiHomog}
   Y(u\circ \delta_\lambda) = \lambda^\alpha (Yu)\circ\delta_\lambda
  \quad \text{for all $u\in C^\infty(\R^n)$ and $\lambda > 0$}.
  \end{equation}
  Writing $Y = \sum_{i = 1}^n a_i(x)\de_{x_i}$ (for suitable functions
  $a_1,\ldots,a_n\in C^\infty(\R^n)$), it is easy to check that
  \eqref{eq.defiHomog} is equivalent to one of the following conditions:
  \begin{itemize}
    \item[(a)] $a_i$ is $\delta_\lambda$-homogeneous of degree $\sigma_i-\alpha$, that is,
    \begin{equation} \label{eq.delaYcoeff}
    a_i\big(\delta_\lambda(x)\big) = \lambda^{\sigma_i-\alpha}a_i(x)
    \quad \text{for all $x\in\R^n$ and $\lambda > 0$};
    \end{equation}
    \item[(b)] for every $x\in\R^n$ and every
    $\lambda > 0$ one has the identity 
    \begin{equation} \label{eq.delaYvector}
     \delta_\lambda\big(Y(x)\big) = \lambda^{\alpha}Y\big(\delta_\lambda(x)\big).
     \end{equation}
  \end{itemize}
  \begin{remark} \label{rem.conseqH}
   We list, for future reference, some easy consequences
   of as\-sum\-ptions (H.1)-(H.2) which shall be useful in the sequel.
   \begin{enumerate}
    \item It is not difficult to check that
     the combination of (H.1) and (H.2) implies the validity
     of H\"ormander's condition at every point $x\in\R^n$, i.e.,
     \begin{equation} \label{eq.HormanderXfull}
      \mathrm{dim}\Big\{Y(x):\,Y\in\mathrm{Lie}(X)\Big\} = n\qquad\text{
      for every $x\in\R^n$}.
    \end{equation}
    
    \item As a consequence of assumption (H.1), and since the $\sigma_i$'s are
    increasingly ordered (see \eqref{eq.defidela}), we derive from
    \cite[Rem.\,1.3.7]{BLUlibro} that $X_1,\ldots,X_m$ are
    \emph{py\-ra\-mid-sha\-ped}. More precisely, if we write (for $i = 1,\ldots,m$)
    $$X_i = \sum_{k = 1}^n a_{k,i}(x)\,\frac{\de }{\de x_k},$$
    we have that $a_{k,i}(x)$ \emph{does not depend} on the variables $x_k,\ldots,x_n$.
    \vspace*{0.05cm}
    
    \item By crucially exploiting the pyramid-shape of the $X_i$'s, 
    it is easy to see that the following {integration-by-parts formula}
    holds: \emph{if $\Omega\subseteq\R^n$ is a bounded o\-pen set, regular for the
    Divergence Theorem ,
  and if $u,v\in C^1(\overline{\Omega})$, then}
    \begin{equation} \label{eq.intbypartsuv}
     \int_{\Omega}u\, X_iv\,\d x
     = \int_{\de\Omega} u\,v\,\langle X_i(x),\nu\rangle\,\d H^{n-1}
     -\int_{\Omega}v\, X_iu\,\d x,
    \end{equation}
    where $\nu$ is the outward normal to $\Omega$. In particular, choosing
    $u\equiv 1$, we get
    \begin{equation} \label{eq.intbypartsv}
     \int_{\Omega}X_iv\,\d x = \int_{\de\Omega}v\,\langle X_i(x),\nu\rangle\,\d H^{n-1}.
    \end{equation}
   \end{enumerate}
  \end{remark}
 \noindent If $\Omega\subseteq\R^n$ is a bounded open set and
 $u\in C^1(\Omega)$, we define the \emph{$X$-gradient of $u$}
 as
 $$\nabla_X u = \big(X_1 u,\ldots,X_m u\big)\in C(\Omega;\R^m);$$
 moreover, if $F = (F_1,\ldots,F_m)\in C^1(\Omega;\mathbb{R}^m)$, 
 we define the \emph{$X$-divergence of $F$} as
 \begin{equation*}
  \textstyle\mathrm{div}_X F=-\sum_{i=1}^m X_i F_i\in C(\Omega).
 \end{equation*}
 \begin{example} \label{exm.VFs}
  Before proceeding,  we present some concrete examples
  of vector fields $X_1,\ldots,X_m$ satisfying
  the `structural' assumptions (H.1)-(H.2).
  \medskip
  
  (1)\,\,In Euclidean space $\R^n$, let $m = n$ and
   $$X_i := \de_{x_i} \qquad (\text{for $i = 1,\ldots,n$}).$$
   Clearly, $X_1,\ldots,X_n$ are homogeneous of degree $1$
   with respect to
   $$\delta_\lambda(x) = (\lambda x_1,\ldots,\lambda x_n) = \lambda x,$$
   so that assumption (H.1) is fulfilled. Moreover, since
   $$[X_i,X_j] = 0\qquad\text{for all $i,j = 1,\ldots,n$},$$
   we have that $\mathrm{Lie}(X) = \mathrm{span}\{\de_{x_1},\ldots,\de_{x_n}\}$,
   and assumption (H.2) trivially holds. We explicitly notice that, in this context, we have
   \begin{itemize}
    \item $\nabla_X u = \nabla u$ for every $u\in C^1(\Omega)$;
    \item $\mathrm{div}_X(F) = -\mathrm{div}(F)$ for every $F = (F_1,\ldots,F_n)\in C^1(\Omega;\R^n)$.
   \end{itemize}
   Moreover, the `sum of squares' naturally associated with $X$ is nothing but
   $$\textstyle\Delta_X = -\sum_{i = 1}^n\de_{x_i}^2 = -\Delta.$$
   
   (2)\,\,Let $k,n_1,n_2\in\N$ be arbitrarily fixed, and let $n := n_1+n_2\geq 2$. We denote a
   generic point $x\in\R^n \equiv \R^{n_1}\times\R^{n_2}$ by 
   $$\text{$x = (y,t)$, where
   $x\in\R^{n_1}$ and $t\in\R^{n_2}$},$$ 
   and we consider the vector fields on $\R^n$ defined as follows:
   \begin{align*}
    & Y_i := \de_{y_i} \qquad (i = 1,\ldots,n_1); \\
    & T_{i,j} := y_i^k\,\de_{t_j}\qquad (\text{$i = 1,\ldots,n_1$ and $j = 1,\ldots,n_2$}).
   \end{align*}
   Then, it is not difficult to recognize that the family
   $$X:= \big\{Y_i,\,T_{i,j}\,:\,\text{$i = 1,\ldots,n_1$ and $j = 1,\ldots,n_2$}\big\}$$
   satisfies both assumptions (H.1) and (H.2). 
   In fact, an easy computation ba\-sed on \eqref{eq.delaYvector} shows that
   the elements of $X$ are homogeneous of degree $1$ with respect to
   $$\delta_\lambda(x)
   = \delta_\lambda(y,t) := (\lambda y,\lambda^{k+1}t),$$
   and thus assumption (H.1) is fulfilled. As regards (H.2), since
   $$\big[\underbrace{Y_i,[Y_i,\cdots [Y_i}_{\text{$k$ times}},T_{i,j}]\cdots]\big]
   = k!\,\de_{t_j},$$
   we derive that $\mathrm{Lie}(X) \supset \mathrm{span}\{\de_{y_i},\,\de_{t_{i,j}}\}$;
   using this inclusion, it is straight\-for\-ward to conclude that
   \eqref{eq.HormanderX} is satisfied.
   We explicitly notice that the `sum of squares' na\-tu\-ral\-ly associated
   with $X$ is the sub-elliptic operator
   $$\textstyle\Delta_X := -\sum_{i = 1}^{n_1}Y_i^2-\sum_{i = 1}^{n_1}\sum_{j = 1}^{n_2}
   T_{i,j}^2
   = -\Delta_y - \big(y_1^{2k}+\cdots+y_{n_1}^{2k}\big)\,\Delta_t.$$
  
  (3)\,\,In Euclidean space $\R^n$, with $n\geq 2$, we consider the vector fields
  $$X_1 := \de_{x_1}, \qquad X_2 := x_1\de_{x_2}+\frac{x_1^2}{2!}\de_{x_3}
  +\cdots+\frac{x_1^{n-1}}{(n-1)!}\,\de_{x_n}.$$
  We claim that the family 
  $$X := \{X_1,X_2\}$$ 
  satisfies both assumptions (H.1)
  and (H.2). In fact, a direct computation based on 
  \eqref{eq.delaYvector} shows that
   $X_1,X_2$ are homogeneous of degree $1$ with respect to
   $$\delta_\lambda(x) := (\lambda x_1,\lambda^2x_2,\ldots,\lambda^nx_n),$$
   and thus assumption (H.1) is fulfilled.
   As for (H.2), since
   $$Y_i := \big[\underbrace{X_1,[X_1,\cdots [X_1}_{\text{$i$ times}},X_2]\cdots]
   = \de_{x_{i+1}}+\sum_{j = 2}^{n-i}\frac{x_1^{j-1}}{(j-1)!}\,\de_{x_{i+j}}\qquad
   (1\leq i\leq n-1),$$
   we have that $Y_1,\ldots,Y_{n-1}\in \mathrm{Lie}(X)$; as a consequence, we get
   $$\{Y(0):\,Y\in\mathrm{Lie}(X)\}\supset\{e_1,\ldots,e_n\},$$
   and this readily proves that \eqref{eq.HormanderX} is satisfied.
   We explicitly notice that the `sum of squares' na\-tu\-ral\-ly associated
   with $X$ is the sub-elliptic operator
   $$\Delta_X := -X_1^2-X_2^2
   = -\de_{x_1}^2+\Big(x_1\de_{x_2}+\cdots+\frac{x_1^{n-1}}{(n-1)!}\de_{x_n}\Big)^2.$$
   This operator was first introduced by Bony is his
   celebrated 1969 paper \cite{Bony}.
  \end{example}
 With all the above assumptions and notation at hand,
 we are ready to introduce the main object of our study.
 Let $\mathcal{D} := \Omega\times\R\times\R^m$ and let
 $$\mathcal{F} = \mathcal{F}(x,z,p):\mathcal{D}\longto\R$$
 satisfy the next two properties:
 \begin{itemize}
  \item[(P.1)] $\mathcal{F}$ is of class $C^1$ on (an open neighborhood
  of) $\overline{\mathcal{D}}$;
  \item[(P.2)] for every $i = 1,\ldots,m$, the function
  $\de_{p_i}\mathcal{F}$ is of class $C^1$ on 
  $\overline{\mathcal{D}}$.
 \end{itemize}
 In this paper, we shall be concerned with the functional
 \begin{equation} \label{eq.defOpForder1}
  F:C^2({\Omega})\longto\R, \qquad
 F(u) := \int_{\Omega}\mathcal{F}\big(x,u(x),\nabla_Xu(x)\big)\,\d x.
 \end{equation}
 \begin{remark} \label{rem.EulerF}
 We explicitly notice that the functional $F$ is intimately
 related with the following \emph{non-linear sub-elliptic} PDE
 \begin{equation} \label{eq.PDEorder1}
  \mathrm{div}_X\Big(\mathcal{F}_p
 \big(x,u(x),\nabla_X u(x)\big)\Big)+\de_z\FF\big(x,u(x),\nabla_X u(x)\big)=0,
 \end{equation}
 where we have used the obvious notation
 $$
 \mathcal{F}_p = 
 \big(\de_{p_1}\FF,\ldots,\de_{p_m}\FF\big).$$
 In fact, if $u\in C^2(\Omega)$ is a
 minimum/maximum point of $F$, we have
 $$\frac{\d}{\d t}\Big|_{t = 0}F(u+t\varphi) = 0\qquad
 \text{for all $\varphi\in C_0^\infty(\Omega)$}.$$
 From this, taking into account properties (P.1)-(P.2) of
 $\mathcal{F}$,
 an application of Le\-be\-sgue's Dominated Convergence Theorem
 shows that, for every $\varphi\in C_0^\infty(\Omega)$,
 \begin{align*}
 & 0 = \int_{\Omega}\Big[\de_z\FF\big(x,u(x),\nabla_Xu(x)\big)\,\varphi
 +\langle\mathcal{F}_p\big(x,u(x),\nabla_Xu(x)\big),\nabla_X\varphi\rangle\Big]\d x
 \\[0.1cm]
 & \qquad (\text{using \eqref{eq.intbypartsuv}, and since $\varphi\in C_0^\infty(\Omega)$}) 
 \\[0.1cm]
 & \quad = 
 \int_{\Omega}\Big[\de_z\FF\big(x,u(x),\nabla_Xu(x)\big)
 +\mathrm{div}_X\Big(\mathcal{F}_p
 \big(x,u(x),\nabla_X u(x)\big)\Big)\Big]\varphi\,\d x,
 \end{align*}
 and thus $u$ satisfies \eqref{eq.PDEorder1} (point-wise on $\Omega$).
 \end{remark}
  \section{Rellich-Pohozaev identity for ${F}$} \label{sec.Orderone}
  Throughout the sequel, $X = \{X_1,\ldots,X_m\}\subseteq\mathcal{X}(\R^n)$
  is a fixed family sa\-ti\-sfy\-ing assumptions (H.1)-(H.2) and $\Omega$
  is a bounded open set, regular for the Divergence Theorem.
  Moreover, $F$ is as in \eqref{eq.defOpForder1} with $\mathcal{F}$ fulfilling
  (P.1)-(P.2).
  \medskip
  
  Our main aim in this section 
  is to prove some integral identities related to $F$.
  To this end, if $\sigma_1,\ldots,\sigma_n$ are as in \eqref{eq.defidela}, 
 we introduce the vector field
 \begin{equation} \label{eq.defiVFT}
  \mathrm{T} := \sum_{i = 1}^n\sigma_ix_i\,\frac{\de }{\de x_i},
 \end{equation}
 which shall be referred to as the \emph{infinitesimal generator 
 of $\{\delta_\lambda\}_{\lambda > 0}$}.
 \begin{remark} \label{rem.relationTdil}
  We explicitly observe, for a future reference, that the vector field 
  $\mathrm{T}$
  is related with the family $\{\delta_\lambda\}_{\lambda > 0}$
  via the following `Euler-type' theorem: \emph{a fun\-ction $u\in C^\infty(\R^n)$
  is $\delta_\lambda$\--ho\-mo\-ge\-neous of degree $m\geq 0$ if and only if}
  $$\mathrm{T}u = mu.$$
  As a consequence, taking into account \eqref{eq.delaYcoeff}, 
  we deduce that a smooth vector field $Y\in\mathcal{X}(\R^n)$ is
  $\delta_\lambda$-homogeneous of degree $\alpha\in\R$ if and only if
  $$[Y,\mathrm{T}] = \alpha Y.$$
 \end{remark}
  We are now ready to prove the following Pohozaev identity for $F$.
  In order to keep our results
  as clear as possible, in the sequel we shall write
  \begin{align*}
   & \mathcal{F},\quad
   \mathcal{F}_x := \big(\de_{x_1}\FF,\ldots,\de_{x_n}\FF\big), 
   \quad \de_z\FF,\quad
  \mathcal{F}_p = \big(\de_{p_1}\FF,\ldots,\de_{p_m}\FF\big)
  \end{align*}
  and, unless otherwise specified, 
  we shall understand that the above functions
  are evaluated at points
  $(x,u(x),\nabla_Xu(x))$ (with $x\in\Omega$).
 \begin{proposition} \label{prop.PohoLL}
  For every $u\in C^2(\overline{\Omega})$, we have the identity
  \begin{equation}\label{eq.PohoLL}
  \begin{split}
   & \int_{\Omega}
   \big(
   q\mathcal{F}-\langle \mathcal{F}_p, \nabla_X u\rangle\big)\,\d x
   +
   \int_{\Omega}\mathrm{T}\big(x\mapsto\mathcal{F}(x,z,p)\big)(x,u,\nabla_Xu)\, \d x 
   \\
   & \qquad\qquad\qquad +
   \int_{\Omega}\mathrm{T}u\,\big(\mathrm{div}_X(\mathcal{F}_p)
   +\de_z\mathcal{F}\big)\, \d x \\
  &\qquad = \int_{\de\Omega}\mathcal{F}\,\langle \mathrm{T}(x),\nu\rangle
  \,\d H^{n-1}- \int_{\de\Omega}\mathrm{T}u\,
  \langle \mathcal{F}_p, \nu_X\rangle\,\d H^{n-1}
   \end{split}
  \end{equation}
  where $H^{n-1}$ is the standard $(n-1)$-dimensional Hausdorff measure
  in $\R^n$, $\nu$ is the outward normal to $\Omega$ and
  $\nu_X$ is given by
  \begin{equation} \label{eq.defnuX}
   \nu_X := \big(\langle X_1(x), \nu\rangle,\ldots, \langle X_m(x),\nu\rangle\big).
   \end{equation}
  Finally, $q$ is the $\delta_\lambda$-homogeneous dimension
  of $\R^n$ defined in \eqref{eq.defq}.
 \end{proposition}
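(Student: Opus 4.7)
The plan is to adapt the classical Rellich--Pohozaev multiplier argument to the anisotropic setting, using $\Tv u$ in place of the Euclidean multiplier $x\cdot\nabla u$. The proof will rest on three preliminary facts already in hand: the integration-by-parts formula \eqref{eq.intbypartsuv}; the elementary identity $\mathrm{div}(\Tv)=\sum_{i=1}^n \sigma_i = q$; and the commutator relation $[X_i,\Tv] = X_i$ for $i=1,\dots,m$, which follows from Remark~\ref{rem.relationTdil} since each $X_i$ is $\delta_\lambda$-homogeneous of degree~$1$.

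The first step is to apply the standard Divergence Theorem to the scalar function $G(x):=\FF(x,u(x),\nabla_X u(x))$. Since $\mathrm{div}(G\,\Tv)=\Tv G + q\,G$, this produces
\begin{equation*}
\int_\Omega \Tv G\,\d x \;=\; \int_{\de\Omega} G\,\langle \Tv(x),\nu\rangle\,\d H^{n-1} \;-\; q\int_\Omega G\,\d x,
\end{equation*}
which already accounts for the first boundary term in \eqref{eq.PohoLL} and for the coefficient $q\FF$. The second step is to compute $\Tv G$ via the chain rule applied to $\FF$ as a function of $(x,z,p)$, obtaining three contributions: $\Tv\big(x\mapsto \FF(x,z,p)\big)\big|_{(x,u,\nabla_X u)}$, then $\de_z\FF\cdot \Tv u$, and finally $\sum_{i=1}^m \de_{p_i}\FF\cdot \Tv(X_i u)$. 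The commutator identity rewrites $\Tv(X_i u) = X_i(\Tv u) - X_i u$, so the last sum splits as $\langle\FF_p,\nabla_X(\Tv u)\rangle - \langle\FF_p,\nabla_X u\rangle$; the second piece immediately matches the term $-\langle\FF_p,\nabla_X u\rangle$ appearing in \eqref{eq.PohoLL}.

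The third and final step is to integrate by parts, via \eqref{eq.intbypartsuv}, each term $\int_\Omega \de_{p_i}\FF\cdot X_i(\Tv u)\,\d x$. Keeping track of the sign built into the definition of $\mathrm{div}_X$, this yields the boundary contribution $\int_{\de\Omega}\Tv u\,\langle\FF_p,\nu_X\rangle\,\d H^{n-1}$ together with the interior term $\int_\Omega \Tv u\cdot \mathrm{div}_X(\FF_p)\,\d x$. Grouping the latter with the earlier $\de_z\FF\cdot \Tv u$ contribution produces the combination $\mathrm{div}_X(\FF_p)+\de_z\FF$ paired with $\Tv u$, which is exactly what appears in \eqref{eq.PohoLL}. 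I do not expect any conceptual obstacle; the computation is essentially bookkeeping, and the only places where care is required are the simultaneous use of the chain rule and the commutator $[X_i,\Tv]=X_i$ (which produces both the $-\langle\FF_p,\nabla_X u\rangle$ term and the integration-by-parts-friendly $X_i(\Tv u)$ term), together with the correct handling of the sign convention in $\mathrm{div}_X$.
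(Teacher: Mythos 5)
Your proposal is correct and follows essentially the same approach as the paper: apply the Divergence Theorem to $\FF\cdot\Tv(x)$ using $\mathrm{div}(\Tv)=q$, expand $\langle\nabla\FF,\Tv(x)\rangle$ via the chain rule, use the commutator $[X_i,\Tv]=X_i$ to rewrite $\Tv(X_iu)$ as $X_i(\Tv u)-X_iu$, and finally integrate by parts via \eqref{eq.intbypartsuv} to produce $\mathrm{div}_X(\FF_p)$ together with the second boundary term. The only difference is cosmetic (packaging the chain-rule/commutator steps versus the paper's separate displays \eqref{eq.nablaFfull}--\eqref{eq.FpnablaXucomm}).
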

 \begin{proof}
  Since $\Omega$ is regular for the Divergence Theorem,
  we have
  $$\int_{\Omega}\mathrm{div}\big(\mathcal{F}\cdot\mathrm{T}(x)\big)\,\d x
  = \int_{\de\Omega}\mathcal{F}\,\langle \mathrm{T}(x),\nu\rangle\,\d H^{n-1}.$$
  From this, observing that $\mathrm{div}(\mathrm{T}(x)) = \sum_{i = 1}^n\sigma_i = q$
  (see \eqref{eq.defq}),
  we obtain 
  \begin{equation} \label{eq.stepIPoho}
  \begin{split}
   & q\int_{\Omega}\mathcal{F}\,\d x
   + \int_{\Omega}\langle \nabla \mathcal{F}, \Tv(x)\rangle\,\d x
   = \int_{\de\Omega}\mathcal{F}\,\langle \mathrm{T}(x),\nu\rangle\,\d H^{n-1},
  \end{split}
  \end{equation}
  where we have used the notation
  $$\nabla\mathcal{F} = \nabla\big(x\mapsto\mathcal{F}\big(x,u(x),\nabla_Xu(x)\big)\big).$$
  Now, according to this notation, a direct calculation gives
  \begin{equation} \label{eq.nablaFfull}
  \begin{split}
  \langle \nabla \mathcal{F}, \Tv(x)\rangle
  & = \Tv\big(x\mapsto\mathcal{F}(x,z,p)\big)\big(x,u(x),\nabla_X u(x)\big) \\[0.1cm]
  & +\mathcal{F}_u\cdot \Tv u(x)+
  \sum_{i = 1}^m\frac{\de\mathcal{F}}{\de p_i}\cdot 
  \Tv(X_iu).
   \end{split}
  \end{equation}  
  On the other hand, since the $X_i$'s are $\delta_\lambda$-homogeneous of degree
  $1$, from Remark \ref{rem.relationTdil} we derive that
  $[X_i,\mathrm{T}] = X_i$ (for every $i = 1,\ldots, m$); hence,
  we can write
  \begin{equation} \label{eq.FpnablaXucomm}
  \begin{split}
  \textstyle\sum_{i = 1}^m\de_{p_i}\FF\cdot 
  \Tv(X_iu) & = \textstyle\sum_{i = 1}^m\de_{p_i}\FF\cdot 
  X_i(\Tv u) + \textstyle\sum_{i = 1}^m\de_{p_i}\FF\cdot 
  [\Tv, X_i]u \\[0.15cm]
   & 
   = \langle \mathcal{F}_p,\nabla_X(\Tv u)\rangle
   - \textstyle\sum_{i = 1}^m\de_{p_i}\FF\cdot X_iu \\[0.15cm]
   & = \langle \mathcal{F}_p,\nabla_X(\Tv u)\rangle -\langle \mathcal{F}_p,\nabla_Xu\rangle.
   \end{split}
  \end{equation}
  By combining \eqref{eq.stepIPoho}, \eqref{eq.nablaFfull}
  and \eqref{eq.FpnablaXucomm}, we then get
  \begin{equation} \label{eq.StepIIPoho}
   \begin{split}
   & \int_{\Omega}\big(q\mathcal{F}-\langle \mathcal{F}_p, \nabla_X u\rangle\big)\,\d x
   +\int_{\Omega} \big[\Tv\big(x\mapsto\mathcal{F}(x,z,p)\big)(x,u,\nabla_X u)
   +\de_z\mathcal{F}\cdot \Tv u\big]\,\d x \\
   & \qquad\qquad\quad +
   \int_{\Omega} \langle \mathcal{F}_p, \nabla_X(\Tv u)\rangle\,\d x 
    = \int_{\de\Omega}\mathcal{F}\,\langle \mathrm{T}(x),\nu\rangle\,\d H^{n-1}.
   \end{split}
  \end{equation}
  To proceed further, we integrate by parts in the last integral in the
  left-hand side of \eqref{eq.StepIIPoho}: 
  using formula \eqref{eq.intbypartsuv} in Remark \ref{rem.conseqH}-(3), we get
  \begin{equation} \label{eq.afterIntegrPart}
  \begin{split}
   & \int_{\Omega} \de_{p_i}\FF\cdot 
   X_i(\mathrm{T}u)\,\d x
    = \int_{\de\Omega}\de_{p_i}\FF\cdot\mathrm{T}u\cdot \langle 
    X_i(x),\nu\rangle\,\d H^{n-1}
    - \int_{\Omega}\mathrm{T}u\,X_i(\mathcal{F}_{p_i})\,\d x.
  \end{split}
  \end{equation}
  Gathering together \eqref{eq.StepIIPoho}, \eqref{eq.afterIntegrPart} and
  the definition of $\nu_X$ in \eqref{eq.defnuX}, we then obtain
  \begin{align*}
  \begin{split}
   & \int_{\Omega}\big(
   q\mathcal{F}-\langle \nabla_p \mathcal{F}, \nabla_X u\rangle\big)\,\d x
   +
   \int_{\Omega}\Tv\big(x\mapsto\mathcal{F}(x,z,p)\big)(x,u,\nabla_X u)\,\d x 
   \\
   &\qquad\qquad\qquad +\int_{\Omega}\mathrm{T}u\big(\mathrm{div}_X(\mathcal{F}_p)
   +\de_z\FF\big)\,\d x \\[0.1cm]
   & \qquad = \int_{\de\Omega}\mathcal{F}\,\langle \mathrm{T}(x),\nu\rangle\,\d H^{n-1}
   - \int_{\de\Omega}\mathrm{T}u\cdot\langle \FF_p,\nu_X\rangle\,\d H^{n-1},
   \end{split}
  \end{align*}
  which is exactly the desired \eqref{eq.PohoLL}.
 \end{proof}  
 Just about to illustrate the applicability
 of Proposition \ref{prop.PohoLL}, we write down
 the integral identity \eqref{eq.PohoLL} for a general $\FF$
 in two concrete examples.
 \begin{example} \label{exm.classicalLap}
  In Euclidean space $\R^n$, let $m = n$ and
  $$X_1 = \de_{x_1},\ldots, X_n = \de_{x_n}.$$
  Moreover, let $\Omega\subseteq\R^n$ be a fixed open set, regular for the Divergence
  Theorem, and let $k \in (1,n]$ be a fixed real number. We then define
  $$\mathcal{F}:\Omega\times\R\times\R^n\longto\R,\qquad
  \mathcal{F}(x,z,p) := \frac{|p|^k}{k}-G(z),$$
  where $G\in C^1(\R)$ is a suitable function satisfying $G(0) = 0$. \vspace*{0.05cm}
  
  We have already recognized in Example \ref{exm.VFs}-(1) that $X
  = \{X_1,\ldots,X_n\}$
  satisfies assumption (H.1) and (H.2); in particular, we have
  $$\nabla_X u = \nabla u \qquad
  \text{for all $u\in C^1(\Omega)$}.$$
  Moreover, since the $X_i$'s are homogeneous of degree $1$ with respect to
  $$\delta_\lambda(x) = \big(\lambda x_1,\ldots,\lambda x_n\big),$$
  from \eqref{eq.defq} and \eqref{eq.defiVFT} we derive that
  $$q = n\qquad\text{and}\qquad\Tv = \textstyle\sum_{i = 1}^n x_i\,\de_{x_i}.$$
  Taking into account all these facts, 
  identity \eqref{eq.PohoLL} boils down to
  \begin{equation} \label{eq.PohokLapl}
   \begin{split}
   & \bigg(\frac{n}{k}-1\bigg)\int_{\Omega}|\nabla u|^{k}\,\d x
   -n\int_{\Omega}G(u)\,\d x
   + \int_{\Omega}\langle x,\nabla u\rangle\big(\Delta_ku+G'(u)\big)\,\d x \\[0.1cm]
   & 
   \qquad = \frac{1}{k}\int_{\de\Omega}\langle x,\nu\rangle\,|\nabla u|^k\,\d H^{n-1} 
   -\int_{\de\Omega}|\nabla u|^{k-2}\langle x,\nabla u\rangle\,\frac{\de u}{\de \nu}\,\d H^{n-1},
  \end{split}
 \end{equation}
  where $\Delta_k u = -\mathrm{div}(|\nabla u|^{k-2}\,\nabla u)$ is
  the usual $k$-Laplacian of $u$. 
 \end{example}
 \begin{example} \label{exm.LapOriz}
  In Euclidean space $\R^n$, let $m < n$ and let
  $X = \{X_1,\ldots,X_m\}$ satisfy assumptions (H.1)-(H.2).
  Moreover, let $\Omega\subseteq\R^n$ be a fixed open set, regular
  for the Divergence Theorem, and let
  $k\in(1,n]$. As in Example \ref{exm.classicalLap}, we then define
  $$\mathcal{F}:\Omega\times\R\times\R^n\longto\R,\qquad
  \mathcal{F}(x,z,p) := \frac{|p|^k}{k}-G(u),$$
  where $G\in C^1(\R)$ and $G(0) = 0$. In this scenario, 
  identity \eqref{eq.PohoLL} boils down to
  \begin{equation} \label{eq.PohokLaplHor}
   \begin{split}
   & \bigg(\frac{q}{k}-1\bigg)\int_{\Omega}|\nabla_X u|^{k}\,\d x
   -q\int_{\Omega}G(u)\,\d x
   + \int_{\Omega}\Tv u\big(\Delta_{X,k}u+G'(u)\big)\,\d x \\[0.1cm]
   & 
   \qquad = \frac{1}{k}\int_{\de\Omega}\langle \Tv(x),\nu\rangle\,|\nabla_X u|^k\,\d H^{n-1} 
   -\int_{\de\Omega}\Tv u\,|\nabla u|^{k-2}\,\langle\nabla_X u,\nu_X\rangle\,\d H^{n-1},
  \end{split}
 \end{equation}
  where $\Delta_{X,k}u$ is the so-called \emph{horizontal $k$-Laplacian} of $u$, that is, 
  $$\Delta_{X,k}u = \mathrm{div}_X(|\nabla_X u|^{k-2}\,\nabla_X u),$$
  and $q,\Tv,\nu_X$ are given, respectively, by \eqref{eq.defq},
  \eqref{eq.defiVFT}, \eqref{eq.defnuX}.
 \end{example}
 \subsection{Applications}
  As it happens in the `classical Euclidean case', Proposition \ref{prop.PohoLL} is a key 
  tool in the study of the PDE \eqref{eq.PDEorder1} naturally associated with $F$, i.e.,
  \begin{equation} \label{eq.semilinearLL}
   \mathrm{div}_X\big(\mathcal{F}_p\big(x,u(x),\nabla_X u(x)\big)\big)+
   \de_z\mathcal{F}\big(x,u(x),\nabla_X u(x)\big)=0.
  \end{equation}
  More precisely, we have the following key result.
  \begin{theorem} \label{thm.IntegralPDE}
   Let $u\in C^2(\overline{\Omega})$ be a solution
   of \eqref{eq.semilinearLL}. Then
   \begin{equation} \label{eq.PohoPDE}
  \begin{split}
   & \int_{\Omega}
   \big(
   q\mathcal{F}-\langle \mathcal{F}_p, \nabla_X u\rangle\big)\,\d x
   +
   \int_{\Omega}\mathrm{T}\big(x\mapsto\mathcal{F}(x,z,p)\big)(x,u,\nabla_Xu)\, \d x 
   \\
  &\qquad = \int_{\de\Omega}\mathcal{F}\,\langle \mathrm{T}(x),\nu\rangle
  \,\d H^{n-1}- \int_{\de\Omega}\mathrm{T}u\,
  \langle \mathcal{F}_p, \nu_X\rangle\,\d H^{n-1}.
   \end{split}
   \end{equation}
   If, in addition, $u\equiv 0$ on $\de\Omega$, for every $a\in\R$ we have
   \begin{equation} \label{eq.PohoBVPzero}
   \begin{split}
   & \int_{\Omega}
   \big(q\mathcal{F}-(a+1)\langle \mathcal{F}_p, \nabla_X u\rangle\big)\,\d x
   +
   \int_{\Omega}\mathrm{T}\big(x\mapsto\mathcal{F}(x,z,p)\big)(x,u,\nabla_Xu)\,\d x 
   \\
  & \qquad 
  -a\int_{\Omega}u\,\de_z\FF\,\d x = \int_{\de\Omega}\big(\mathcal{F}
  -\langle\FF_p,\nabla_X u\rangle\big)\cdot
  \langle \mathrm{T}(x),\nu\rangle\,\d H^{n-1}.
   \end{split}
   \end{equation}
  \end{theorem}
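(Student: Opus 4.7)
The plan is to deduce both identities from Proposition \ref{prop.PohoLL} together with the PDE (\ref{eq.semilinearLL}). Identity (\ref{eq.PohoPDE}) is immediate: applying (\ref{eq.PohoLL}) to $u$, the third integral on the left-hand side contains the factor $\mathrm{div}_X(\mathcal{F}_p)+\de_z\mathcal{F}$, which vanishes pointwise on $\Omega$ because $u$ solves (\ref{eq.semilinearLL}); the remaining terms give exactly (\ref{eq.PohoPDE}).

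To prove (\ref{eq.PohoBVPzero}), I would first recast the boundary integral $\int_{\de\Omega}\mathrm{T}u\,\langle\mathcal{F}_p,\nu_X\rangle\,\d H^{n-1}$ using the Dirichlet hypothesis $u\equiv 0$ on $\de\Omega$. This condition forces the full Euclidean gradient to be normal on $\de\Omega$, so that $\nabla u=(\de u/\de\nu)\,\nu$; consequently $\mathrm{T}u=(\de u/\de\nu)\langle\mathrm{T}(x),\nu\rangle$ and $X_iu=(\de u/\de\nu)\langle X_i(x),\nu\rangle$, whence $\nabla_X u=(\de u/\de\nu)\,\nu_X$ on $\de\Omega$. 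Multiplying these two scalar relations produces the key pointwise identity
$$\mathrm{T}u\cdot\langle\mathcal{F}_p,\nu_X\rangle=\langle\mathrm{T}(x),\nu\rangle\cdot\langle\mathcal{F}_p,\nabla_X u\rangle\qquad\text{on }\de\Omega,$$
which collapses the two boundary integrals in (\ref{eq.PohoPDE}) into the single one appearing on the right-hand side of (\ref{eq.PohoBVPzero}). This already proves the case $a=0$.

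To obtain the general statement, I would test the PDE (\ref{eq.semilinearLL}) against $u$: integrating over $\Omega$ and applying (\ref{eq.intbypartsuv}) componentwise to each product $u\,X_i(\mathcal{F}_{p_i})$ (keeping track of the sign convention in the definition of $\mathrm{div}_X$), the boundary contributions vanish thanks to $u\equiv 0$ and one obtains the \emph{virial identity}
$$\int_{\Omega}\langle\mathcal{F}_p,\nabla_X u\rangle\,\d x+\int_{\Omega}u\,\de_z\mathcal{F}\,\d x=0.$$
Subtracting $a$ times this relation from the $a=0$ version of (\ref{eq.PohoBVPzero}) established above yields the full identity. The only mildly delicate step is the boundary rewriting: it relies crucially on $u\equiv 0$ forcing $\nabla u$ to be normal to $\de\Omega$, which is precisely why, as noted in the introduction, a full Euclidean Dirichlet condition (rather than merely $\nabla_X u=0$) is required.
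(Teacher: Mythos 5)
Your proposal is correct and follows essentially the same route as the paper: identity (\ref{eq.PohoPDE}) drops out of Proposition \ref{prop.PohoLL} by using the PDE; the virial identity $\int_{\Omega}\bigl(\langle\mathcal{F}_p,\nabla_X u\rangle+u\,\de_z\FF\bigr)\d x=0$ is obtained by testing the PDE against $u$ and integrating by parts; and the Dirichlet condition $u\equiv 0$ on $\de\Omega$ is used to turn $\mathrm{T}u\,\langle\mathcal{F}_p,\nu_X\rangle$ into $\langle\Tv(x),\nu\rangle\langle\FF_p,\nabla_X u\rangle$. The only difference is cosmetic ordering — you first collapse the boundary term to get the $a=0$ case and then subtract $a$ times the virial identity, while the paper first adds $a$ times the virial identity and then collapses the boundary term — so the two arguments are the same.
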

  \begin{proof}
  As regards identity \eqref{eq.PohoPDE}, it is an
   immediate consequence of \eqref{eq.PohoLL} and of the fact
   that, $u$ being a solution of \eqref{eq.semilinearLL}, one has
   $$\int_{\Omega}\mathrm{T}u\,\big(\mathrm{div}_X(\mathcal{F}_p)
   +\de_z\mathcal{F}\big)\, \d x = 0.$$
   We then turn to prove identity \eqref{eq.PohoBVPzero} under the additional
   assumption that $u\equiv 0$ on $\de\Omega$. To this end,
   we first claim that
   \begin{equation} \label{eq.claimedwitha}
    \int_{\Omega}\big(\langle \mathcal{F}_p, \nabla_X u\rangle+u\,\de_z\FF\big)\,\d x = 0.
   \end{equation}
   Indeed, since $u$ is a solution of \eqref{eq.semilinearLL}, we have
   \begin{align*}
    \textstyle \sum_{i = 1}^mX_i\big(u\,\de_{p_i}\FF\big)
    & = \langle\nabla_X u,\FF_p\rangle - u\,\mathrm{div}_X(\FF_p)
    = \langle \mathcal{F}_p, \nabla_X u\rangle+u\,\de_z\FF;
   \end{align*}
   from this, using Divergence's Theorem and the fact that
   $u\equiv 0$ on $\de\Omega$, we get
   \begin{align*}
    & \int_{\Omega}\big(\langle \mathcal{F}_p, \nabla_X u\rangle+u\,\de_z\FF\big)\,\d x
     = \sum_{i = 1}^m\int_{\Omega}X_i\big(u\,\de_{p_i}\FF\big)\,\d x
    \\[0.1cm]
    & \qquad\quad (\text{writing, as usual, 
    $\textstyle X_i = \sum_{k=1}^na_{k,i}(x)\,\de_{x_k}$}) \\[0.1cm]
    & \qquad\quad 
    = \sum_{i = 1}^m\sum_{k = 1}^n
    \int_{\Omega}a_{k,i}(x)\,\de_{x_k}\big(u\,\de_{p_i}\FF\big)\,\d x
    \\
    & \qquad\quad = 
    \sum_{i = 1}^m\sum_{k = 1}^n
    \bigg(\int_{\de\Omega}a_{k,i}(x)\,u\,\de_{p_i}\FF\cdot\nu_k
    - \int_{\Omega}u\,\de_{p_i}\FF\cdot\de_{x_k}a_{k,i}\,\d x\bigg) 
    = 0,
   \end{align*}
   which is precisely the claimed \eqref{eq.claimedwitha}
   (see also Remark \ref{rem.conseqH}-(2)).
   By combining this last identity with \eqref{eq.PohoPDE}
   we then obtain, for every choice of $a\in\R$,
   \begin{equation} \label{eq.PohoPDEa}
  \begin{split}
   & \int_{\Omega}
   \big(
   q\mathcal{F}-(a+1)\langle \mathcal{F}_p, \nabla_X u\rangle\big)\,\d x
   +
   \int_{\Omega}\mathrm{T}\big(x\mapsto\mathcal{F}(x,z,p)\big)(x,u,\nabla_Xu)\, \d x 
   \\
  & \,\, 
  -a\int_{\Omega}u\,\de_z\FF\,\d x = \int_{\de\Omega}\mathcal{F}\,\langle \mathrm{T}(x),\nu\rangle
  \,\d H^{n-1}- \int_{\de\Omega}\mathrm{T}u\,
  \langle \mathcal{F}_p, \nu_X\rangle\,\d H^{n-1}.
   \end{split}
   \end{equation}
   To complete the demonstration, we now perform some algebraic computations
   on the term $\mathrm{T}u\,
  \langle \mathcal{F}_p, \nu_X\rangle$.
  Using the very definition of $\Tv$ and $\nu_X$, and writing
  $$\textstyle X_k = \sum_{j = 1}^na_{j,k}(x)\,\de_{x_j} \qquad
  (i = 1,\ldots,m),$$
  we derive the following chain of equality:
  \begin{align*}
   \mathrm{T}u\,
  \langle \mathcal{F}_p, \nu_X\rangle
  & = \sum_{i = 1}^n\sum_{k = 1}^m\sigma_i\,x_i\,\de_{x_i}u\cdot
  \de_{p_k}\FF\cdot\langle X_k(x),\nu\rangle \\
  & = \sum_{i,j = 1}^n\sum_{k = 1}^m\sigma_i\,x_i\,\de_{x_i}u\cdot
  \de_{p_k}\FF\cdot a_{j,k}(x)\,\nu_j =: (\bigstar).
  \end{align*}
  On the other hand, since $u\equiv 0$ on $\de\Omega$, for every $i = 1,\ldots, n$ we have
  $$\de_{x_i}u = \frac{\de u}{\de \nu}\cdot\nu_i;
  $$
  as a consequence, we obtain
  \begin{align*}
   (\bigstar) & = 
   \sum_{i,j = 1}^n\sum_{k = 1}^m
   \de_{p_k}\FF\cdot\big(\sigma_i\,x_i\,\nu_i\big)\cdot
  a_{j,k}(x)\,\Big(\frac{\de u}{\de \nu}\cdot\nu_j\Big)  \\
  & = \Big(\sum_{i = 1}^n\sigma_i\,x_i\,\nu_i\Big)\cdot
  \sum_{k = 1}^m\de_{p_k}\FF\cdot
  \Big(\sum_{j = 1}^n a_{j,k}(x)\,\de_{x_j}u\Big) \\
  & = \langle \Tv(x),\nu\rangle\cdot 
  \sum_{k = 1}^m\de_{p_k}\FF\cdot X_ku 
   = \langle \Tv(x),\nu\rangle\cdot \langle\FF_p,\nabla_X u\rangle.
  \end{align*}
  By inserting this last identity into 
  \eqref{eq.PohoPDEa}, we finally get 
  \begin{equation*}
  \begin{split}
   & \int_{\Omega}
   \big(
   q\mathcal{F}-(a+1)\langle \mathcal{F}_p, \nabla_X u\rangle\big)\,\d x
   +
   \int_{\Omega}\mathrm{T}\big(x\mapsto\mathcal{F}(x,z,p)\big)(x,u,\nabla_Xu)\, \d x 
   \\
  & \qquad 
  -a\int_{\Omega}u\,\de_z\FF\,\d x = \int_{\de\Omega}
  \langle \mathrm{T}(x),\nu\rangle\big(\FF-\langle\FF_p,\nabla_X u\rangle\big)\,\d H^{n-1},
   \end{split}
   \end{equation*}
  which is exactly the desired \eqref{eq.PohoBVPzero}. This ends the proof.
  \end{proof}
  With Theorem \ref{thm.IntegralPDE} at hand, we are ready to 
  prove our non-existence result for the Dirichlet
  problem associated with \eqref{eq.semilinearLL}.
  Before doing this, we fix a definition.
  \begin{definition} \label{def.starshaped}
   Let $\Omega\subseteq\R^n$ be a non-void open set with $C^1$ boundary. 
   We say that $\Omega$
   is \emph{$\delta_\lambda$-star shaped} (with respect to the origin) if
   \begin{equation} \label{eq.starshaped}
    \langle\Tv(x),\nu\rangle\geq 0\qquad\text{for every $x\in\de\Omega$},
   \end{equation}
   where $\Tv$ is as in \eqref{eq.defiVFT} and $\nu$ is the outward normal to $\de\Omega$.
  \end{definition}
  \begin{theorem} \label{thm.mainNonexistOrder1}
   Let $\Omega\subseteq\R^n$ be a \emph{connected} open set, regular for the Divergence Theorem
   and $\delta_\lambda$-star shaped with respect to the origin. 
   We assume that
   \begin{itemize}
    \item[(i)] $\FF(x,0,p)-\langle\FF_p(x,0,p),p\rangle\leq 0$ 
    for all $x\in\de\Omega$ and $p\in\R^m$; \vspace*{0.07cm}
    
    \item[(ii)] there exists a number $a_0\in\R$ such that
    \begin{equation} \label{eq.signFFnonexistence}
    \begin{split}
     & q\FF(x,z,p)-(a_0+1)\langle\FF_p(x,z,p),p\rangle \\
     & \qquad +
     \mathrm{T}\big(x\mapsto\mathcal{F}(x,z,p)\big)-a_0\,z\,\de_z\FF(x,z,p)
     \geq 0
     \end{split}
    \end{equation}
    for every $x\in\Omega$ and every $(z,p)\in\R\times\R^m$; \vspace*{0.07cm}
    
    \item[(iii)] either $z = 0$ or $p = 0$ when \eqref{eq.signFFnonexistence}
    holds.
   \end{itemize}
   Then, the boundary-value problem
   \begin{equation} \label{eq.BVPoder1}
    \begin{cases}
    \mathrm{div}_X\big(\mathcal{F}_p\big(x,u(x),\nabla_X u(x)\big)\big)+
   \de_z\mathcal{F}\big(x,u(x),\nabla_X u(x)\big) = 0 & \text{in $\Omega$}, \\[0.1cm]
   u\big|_{\de\Omega} = 0
   \end{cases}
   \end{equation}
   has no non-trivial solutions $u\in C^2(\overline{\Omega})$.
  \end{theorem}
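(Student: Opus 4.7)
The strategy is to specialize identity \eqref{eq.PohoBVPzero} of Theorem~\ref{thm.IntegralPDE} to $a=a_{0}$, extract opposite-sign information from its two sides to force both integrals to vanish, and then promote the resulting pointwise information on $u$ to $u\equiv 0$ via H\"ormander's condition and the connectedness of $\Omega$.

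\emph{Step 1 (applying the Pohozaev identity).} Let $u\in C^{2}(\overline{\Omega})$ be a putative non-trivial solution of \eqref{eq.BVPoder1}. Since $u$ satisfies \eqref{eq.semilinearLL} and $u|_{\de\Omega}=0$, Theorem~\ref{thm.IntegralPDE} applies; I would evaluate \eqref{eq.PohoBVPzero} with the specific $a=a_{0}$ supplied by hypothesis (ii), obtaining an identity between an interior integral over $\Omega$ and a single boundary integral over $\de\Omega$.

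\emph{Step 2 (sign analysis).} On $\de\Omega$, where $u\equiv 0$, the boundary integrand in \eqref{eq.PohoBVPzero} reduces to
\[
\langle \Tv(x),\nu\rangle\,\bigl(\FF(x,0,\nabla_{X}u)-\langle \FF_{p}(x,0,\nabla_{X}u),\nabla_{X}u\rangle\bigr).
\]
The $\delta_{\lambda}$-star-shaped hypothesis makes the first factor non-negative and hypothesis (i) makes the bracketed factor non-positive, so the right-hand side of \eqref{eq.PohoBVPzero} is $\leq 0$. Simultaneously, hypothesis (ii) says precisely that the pointwise integrand on the left-hand side, evaluated at $(z,p)=(u(x),\nabla_{X}u(x))$, is $\geq 0$ at every $x\in\Omega$. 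Hence both sides of \eqref{eq.PohoBVPzero} must vanish, and the non-negative continuous interior integrand is identically zero on $\Omega$.

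\emph{Step 3 (pointwise dichotomy and propagation via H\"ormander).} Hypothesis (iii) now yields, at each $x\in\Omega$, the alternative $u(x)=0$ or $\nabla_{X}u(x)=0$. Set $N:=\{x\in\Omega:u(x)\neq 0\}$; this is open and $\nabla_{X}u\equiv 0$ on $N$. The technical heart of the argument, and the step I expect to be the main obstacle, is to upgrade this pointwise dichotomy to $N=\emptyset$: I would invoke the Chow--Rashevskii accessibility theorem, valid here because the $X_{i}$'s satisfy the full H\"ormander condition \eqref{eq.HormanderXfull}, to join any two points of a connected component $C$ of $N$ by a piecewise horizontal $C^{1}$ curve $\gamma$ lying in $C$. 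Since $\dot\gamma$ is a combination of the $X_{i}$'s and $X_{i}u\equiv 0$ on $C$, the function $t\mapsto u(\gamma(t))$ is constant, so $u\equiv c$ on $C$ for some $c\neq 0$. Because $\Omega$ is connected, either $C=\Omega$, which contradicts $u|_{\de\Omega}=0$ by continuity up to the boundary, or $C$ is not closed in $\Omega$ and admits a limit point $x_{0}\in\Omega\setminus C$; since the connected components of the open set $N\subseteq\R^{n}$ are open, $x_{0}\notin N$ and hence $u(x_{0})=0$, contradicting $u(x_{0})=c\neq 0$ by continuity. In either case $N=\emptyset$ and $u\equiv 0$ on $\overline{\Omega}$.
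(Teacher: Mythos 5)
Your proof is correct and follows the same overall strategy as the paper: specialize the Pohozaev identity \eqref{eq.PohoBVPzero} to $a=a_0$, use the $\delta_\lambda$-star-shape hypothesis together with (i) to make the boundary term non-positive, combine with (ii) to force the (continuous, pointwise non-negative) interior integrand to vanish identically, and finish with the Chow--Rashevsky connectivity theorem. There is, however, one genuine difference worth flagging. The paper reads assumption (iii) as yielding a \emph{global} dichotomy --- either $u\equiv 0$ throughout $\Omega$ or $\nabla_X u\equiv 0$ throughout $\Omega$ --- and treats only these two cases. But (iii) is a pointwise statement about the equality set in \eqref{eq.signFFnonexistence}: it only gives, at each $x\in\Omega$, that $u(x)=0$ or $\nabla_X u(x)=0$. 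You correctly recognize this and close the gap by introducing $N=\{u\neq 0\}$, applying Chow--Rashevsky on each connected component of $N$ to conclude $u$ is locally constant there, and then deriving a contradiction from continuity (either the component equals $\Omega$ and clashes with $u|_{\partial\Omega}=0$, or it has a boundary point in $\Omega$ where $u$ must both vanish and equal a nonzero constant). This extra topological patching is exactly what is needed to turn the pointwise alternative into a global conclusion, so your argument is a modest but real tightening of the paper's proof; the remaining steps (sign analysis on \eqref{eq.PohoBVPzero}, use of the star-shape condition) coincide with the paper's.
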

  \begin{proof}
   Let $u\in C^2(\overline{\Omega})$ be a solution
   of \eqref{eq.BVPoder1}. We need to prove that
   \begin{equation} \label{eq.toproveuzero}
   \text{$u\equiv 0$ in $\Omega$}.
   \end{equation}
   First of all, since $u\equiv 0$ on $\de\Omega$ 
   and since $\Omega$ is $\delta_\lambda$-star shaped (with respect to the origin),
   by combining assumption (i) and identity \eqref{eq.PohoPDEa} we derive that
   \begin{equation*}
    \begin{split}
   & \int_{\Omega}
   \big(q\mathcal{F}-(a+1)\langle \mathcal{F}_p, \nabla_X u\rangle\big)\,\d x
   +
   \int_{\Omega}\mathrm{T}\big(x\mapsto\mathcal{F}(x,z,p)\big)(x,u,\nabla_Xu)\,\d x 
   \\
  & \qquad 
  -a\int_{\Omega}u\,\de_z\FF\,\d x \leq 0 \qquad (\text{for every $a\in\R$});
   \end{split}
   \end{equation*}
   from this, using assumption (ii) we obtain
   \begin{equation*}
    \begin{split}
     q\FF - (a_0+1)\langle \mathcal{F}_p, \nabla_X u\rangle 
   + \mathrm{T}\big(x\mapsto\mathcal{F}(x,z,p)\big)(x,u,\nabla_Xu)
   -a_0\,u\,\de_z\FF = 0
   \end{split}
   \end{equation*}
   for every $x\in\Omega$. Now, according to assumption (iii), only
   two cases can occur: \medskip
   
    (a)\,\,$u\equiv 0$ on $\Omega$. In this case, \eqref{eq.toproveuzero} is
    satisfied and the proof is complete. \medskip
    
    (b)\,\,$\nabla_X u \equiv 0$ on $\Omega$. In this case,
    since $X_iu\equiv 0$ on $\Omega$ for every $i = 1,\ldots,m$,
    we see that $u$ \emph{is constant along any integral curve
    of $\pm X_1,\ldots,\pm X_m$ contained in $\Omega$}.
    On the other hand, since $\Omega$ is connected and the $X_i$'s
    satisfy H\"or\-man\-der's rank condition,
    the Chow-Rashevsky Connectivity Theorem holds (see, e.g., \cite[Thm.\,6.22]{BBBook}):
    for e\-ve\-ry $x,y\in\Omega$ there exists a continuous path $\gamma:[0,1]\to\Omega$, which is
    piecewise an integral curve of the vector fields $\pm X_1,\ldots,\pm X_m$, such that
    $$\gamma(0) = x\qquad\text{and}\qquad\gamma(1) = y.$$
    As a consequence, $u$ is constant along $\gamma$ and, in particular,
    $$u(x) = u(y).$$
    Due to the arbitrariness of $x,y\in\Omega$, we then conclude that
    $u$ is constant throughout on $\Omega$. From this, since $u\equiv 0$
    on $\de\Omega$, we derive that $u\equiv 0$ on $\Omega$, and the needed
    \eqref{eq.toproveuzero} 
    is again satisfied. This ends the proof.
  \end{proof}
  We end this section by `specializing' the assumptions of
  Theorem \ref{thm.mainNonexistOrder1} in the concrete cases
  discussed in Examples \ref{exm.classicalLap} and \ref{exm.LapOriz}.
  \begin{example} \label{exm.nonexistclassicalLap}
   In Euclidean space $\R^n$,
   let $m = n$ and let $X = \{X_1,\ldots,X_n\},\,\FF$ be as in Example \ref{exm.classicalLap}.
 	For every $x\in\R^n$ and every $p\in\R^n$, we have
 	\begin{align*}
 	 \FF(x,0,p)-\langle\FF_p(x,0,p),p\rangle
 	 & = \frac{|p|^k}{k} - G(0) - |p|^{k-2}\langle p,p\rangle 
 	 = |p|^k\bigg(\frac{1}{k}-1\bigg)\leq 0
 	\end{align*}
 	(as $k > 1$ and $G(0) = 0$); hence, assumption (i) of Theorem \ref{thm.mainNonexistOrder1} 
 	is satisfied. Moreover, since in this case $q = n$
 	and $\FF$ is independent of $x$, one also has
 	\begin{align*}
 	 & q\FF(x,z,p)-(a+1)\langle\FF_p(x,z,p),p\rangle +
     \mathrm{T}\big(x\mapsto\mathcal{F}(x,z,p)\big)-a\,z\,\de_z\FF(x,z,p) \\[0.1cm]
     & \qquad\qquad = \frac{n}{k}|p|^k-nG(z)-(a+1)|p|^k
     +az\,G'(z).
 	\end{align*}
	Let us now suppose that $G$ enjoys the following
	additional properties:
	\begin{itemize}
	 \item[(a)] $G'(0) = 0$; \vspace*{0.05cm}
	 \item[(b)] for every $z\in\R\setminus\{0\}$ we have the growth condition
	 \begin{equation} \label{eq.growthG}
	 G(z) < \varrho z\,G'(z), \qquad 
	 \text{where $\varrho := \frac{1}{k}-\frac{1}{n}$}.
	\end{equation}
	\end{itemize}
	Then, it is immediate to check that assumptions (ii)-(iii) of Theorem
	\ref{thm.mainNonexistOrder1} are satisfied with the choice
	$a_0 := \varrho$. As a consequence, the Dirichlet problem
	$$
	 \begin{cases}
	  \Delta_k u + G'(u) = 0 & \text{in $\Omega$}, \\[0.1cm]
	   u\big|_{\de\Omega} = 0,
	 \end{cases}
	$$
	possesses no non-trivial solutions $u\in C^2(\overline{\Omega})$
	if \eqref{eq.growthG} holds.
 \end{example}
  \begin{example} \label{exm.nonexisthorizontalLap}
   In Euclidean space $\R^n$,
   let $m < n$ and let $X = \{X_1,\ldots,X_m\},\,\FF$ be as in Example 
   \ref{exm.LapOriz}. By arguing exactly 
   as in Example \ref{exm.nonexistclassicalLap}, we see that
   assumption (i) of Theorem \ref{thm.mainNonexistOrder1} is satisfied;
   moreover, since $\FF$ is independent of $x$, we have
 	\begin{align*}
 	 & q\FF(x,z,p)-(a+1)\langle\FF_p(x,z,p),p\rangle +
     \mathrm{T}\big(x\mapsto\mathcal{F}(x,z,p)\big)-a\,z\,\de_z\FF(x,z,p) \\[0.1cm]
     & \qquad\qquad = \frac{q}{k}|p|^k-qG(z)-(a+1)|p|^k
     +az\,G'(z),
 	\end{align*}
 	where $q\geq n$ is as in \eqref{eq.defq}. 
 	Again as in Example \ref{exm.nonexistclassicalLap}, let us assume that
 	\begin{itemize}
	 \item[(a)] $G'(0) = 0$; \vspace*{0.05cm}
	 \item[(b)] for every $z\in\R\setminus\{0\}$ we have the growth condition
	 \begin{equation} \label{eq.growthGhor}
	 G(z) < \varrho_q z\,G'(z), \qquad 
	 \text{where $\varrho_q := \frac{1}{k}-\frac{1}{q}$}.
	\end{equation}
	\end{itemize}
	Then, it is immediate to check that assumptions (ii)-(iii) of Theorem
	\ref{thm.mainNonexistOrder1} are satisfied with the choice
	$a_0 := \varrho_q$. As a consequence, the Dirichlet problem
	$$
	 \begin{cases}
	  \Delta_{X,k} u + G'(u) = 0 & \text{in $\Omega$}, \\[0.1cm]
	   u\big|_{\de\Omega} = 0,
	 \end{cases}
	$$
	possesses no non-trivial solutions $u\in C^2(\overline{\Omega})$
	if \eqref{eq.growthGhor} holds.
 \end{example}
 \section{Toward a non-existence result for functionals \\ with $X$-Hessian dependence}
 \label{sec.secondorder}
 In this last section, we discuss a possible extension of the non-existence
 result in Theorem \ref{thm.mainNonexistOrder1} to functionals $F$
 with {$X$-Hessian dependence}. 
 In order the clearly describe this setting, we fix a family
 $X = \{X_1,\ldots,X_m\}\subseteq\mathcal{X}(\R^n)$ satisfying
 as\-sump\-tions (H.1)-(H.2), and we inherit all the notation introduced so far.
 In addition, if $\Omega\subseteq\R^n$ is an open set and if
 $u\in C^2(\Omega)$, we define 
 \begin{equation} \label{eq.defiHHXu}
 \HH_Xu = \begin{pmatrix}
  X_j(X_iu)
 \end{pmatrix}_{i,j = 1}^m
 = \begin{pmatrix}
 X_1^2 u & X_2(X_1u) & \cdots & X_m(X_1u) \\[0.1cm]
 X_1(X_2u) & X_2^2u & \cdots & X_m(X_2u) \\[0.1cm]
 \vdots & \vdots & \ddots & \vdots \\[0.1cm]
 X_1(X_mu) & X_2(X_mu) & \cdots & X_m^2u
 \end{pmatrix},
 \end{equation}
 and we call the matrix $\HH_Xu$ the \emph{$X$-Hessian of $u$}.
 \begin{remark} \label{rem.HXnonsym}
  It should be explicitly noticed that, since the $X_i$'s \emph{do not commute}
  (in general),
  the matrix $\HH_Xu$ \emph{is not symmetric}, i.e.,
  $X_i(X_ju)\not\equiv X_j(X_iu)$.
 \end{remark}
 Let now $\mathcal{O}:=\Omega\times\R\times\R^n\times\R^{m^2}$ and let
 $$\FF:\mathcal{O}\longto\R, \qquad
 \FF = \FF(x,z,p,r) = \FF\big(x,z,p,\{r_{ij}\}_{i,j = 1}^m\big),$$
 satisfy the next three properties:
 \begin{itemize}
  \item[(P.1)] $\FF$ is of class $C^1$ on (an open neighborhood of) $\overline{\mathcal{O}}$;
  \item[(P.2)] for every $i = 1,\ldots,m$, the function
  $\FF_{p_i}$ is of class $C^1$ on $\overline{\mathcal{O}}$;
  \item[(P.3)] for every $i,j = 1,\ldots,m$, 
  the function $\FF_{r_{ij}}$ is of class $C^2$ on $\overline{\mathcal{O}}$.
 \end{itemize}
 We then consider the functional $F:C^4(\Omega)\to\R$ defined as follows:
 \begin{equation} \label{eq.functionalFsecond}
 \begin{split}
  F(u) & := \int_{\Omega}\FF\big(x,u(x),\nabla_Xu(x),\HH_Xu(x)\big)\,\d x \\[0.1cm]
  & = \int_{\Omega}\FF\big(x,u(x),\nabla_Xu(x),\{X_j(X_iu)\}_{i,j = 1}^m\big)\,\d x.
  \end{split}
 \end{equation}
 \begin{remark} \label{rem.EulerLagSecondOrder}
  Analogously to what described in Remark \ref{rem.EulerF}, the functional
  $F$ defined in \eqref{eq.functionalFsecond} is deeply related with
  the following non-linear PDE
  \begin{equation} \label{eq.PDEsecondorder}
  \begin{split}
   \sum_{i,j=1}^m X_iX_j&\Big(\mathcal{F}_{r_{ij}}\big(x,u,\nabla_X u, \HH_X u)\Big)
   +\mathrm{div}_X\Big(\mathcal{F}_p
 \big(x,u(x),\nabla_X u(x),\HH_Xu\big)\Big) \\
 & \qquad\qquad +
 \mathcal{F}_z\big(x,u,\nabla_X u,\HH_X u\big) = 0.
 \end{split}
  \end{equation}
  In fact, if $u\in C^4(\Omega)$ is a
 minimum/maximum point of $F$, we have
 $$\frac{\d}{\d t}\Big|_{t = 0}F(u+t\varphi) = 0\qquad
 \text{for all $\varphi\in C_0^\infty(\Omega)$}.$$
 From this, taking into account properties (P.1)-to-(P.3) of
 $\mathcal{F}$,
 an application of Le\-be\-sgue's Dominated Convergence Theorem
 shows that, for every $\varphi\in C_0^\infty(\Omega)$,
 \begin{align*}
 & 0 = \int_{\Omega}\Big[
 \de_z\FF\big(x,u(x),\nabla_Xu(x),\HH_Xu(x)\big)\,\varphi 
 + \langle\mathcal{F}_p\big(x,u(x),\nabla_Xu(x),\HH_Xu(x)\big),\nabla_X\varphi\rangle \\
 & \qquad\qquad\qquad
 +\textstyle\sum_{i,j = 1}^m\de_{r_{ij}}\FF\big(x,u(x),\nabla_Xu(x),\HH_Xu(x)\big)
 \cdot X_j(X_i\varphi)\Big]\d x
 \\[0.1cm]
 & \qquad (\text{using formula \eqref{eq.intbypartsuv}, and since
 $\varphi\in C_0^\infty(\Omega)$}) \\[0.1cm]
 & \quad
 = \int_{\Omega}\Big[
 \de_z\FF\big(x,u(x),\nabla_Xu(x),\HH_Xu(x)\big) + 
 \mathrm{div}_X\Big(\mathcal{F}_p
 \big(x,u(x),\nabla_X u(x),\HH_Xu(x)\big)\Big) \\
 & \qquad\qquad\qquad + 
 \textstyle\sum_{i,j=1}^m X_iX_j\Big(\mathcal{F}_{r_{ij}}\big(x,u,\nabla_X u, \HH_X u)\Big)
 \Big]\,\varphi\,\d x,
 \end{align*}
 and thus $u$ satisfies \eqref{eq.PDEsecondorder} (point-wise on $\Omega$).
 \end{remark}
 With all these preliminaries at hand, 
 we are ready to prove the analog of Pro\-po\-si\-tion \ref{prop.PohoLL} in this context.
 For notational simplicity, in what follows we write
 \begin{align*}
  & \,\,\,\,\mathcal{F},\quad
   \mathcal{F}_x = \big(\de_{x_1}\FF,\ldots,\de_{x_n}\FF\big), 
   \quad \de_z\FF, \\
   &
  \mathcal{F}_p = \big(\de_{p_1}\FF,\ldots,\de_{p_m}\FF\big),\quad
  \mathcal{F}_{r_{ij}} := \de_{r_{ij}}\FF
 \end{align*}
 and, unless otherwise specified, we understand that all the above functions
 are e\-va\-lu\-a\-ted at points $(x,u(x),\nabla_Xu(x),\HH_Xu(x))$ 
 (for $x\in\Omega$). Moreover,
 we  tacitly assume that the open
 set $\Omega$ is \emph{bounded and regular for the Divergence Theorem}.
 \begin{proposition}
 \label{prop.PohoOderdue}
 For every $u\in C^4(\overline{\Omega})$, we have the identity
 \begin{equation} \label{eq.PohoLLOrderdue}
  \begin{split}
   & \int_{\Omega}\big(q\mathcal{F}-\langle \nabla_p \mathcal{F}, \nabla_X u\rangle\big)\,\d x
   +
   \int_{\Omega}\mathrm{T}\big(x\mapsto\mathcal{F}(x,z,p,r)\big)(x,u,\nabla_Xu,\HH_X u)\, \d x 
   \\[0.1cm]
   & \qquad\quad +
   \int_{\Omega}\mathrm{T}u\,\big(\mathrm{div}_X(\mathcal{F}_p)
   + \textstyle\sum_{i,j=1}^m X_i\big(X_j(\mathcal{F}_{r_{ij}})\big)+\mathcal{F}_u\big)\,\d x
   \\[0.1cm]
   & \qquad\quad - 2\sum_{i,j = 1}^m\int_{\Omega}\mathcal{F}_{r_{ij}}\,X_j(X_i u)\, \d x
   \\[0.1cm]
   & \quad = 
   \int_{\de\Omega}\mathcal{F}\,\langle \mathrm{T}(x),\nu\rangle\,\d H^{n-1} - 
   \int_{\de\Omega}\mathrm{T}u\,\langle \FF_p, \nu_X\rangle\,\d H^{n-1} \\[0.1cm]
   & \qquad\quad
   +\sum_{i,j = 1}^m 
   \int_{\de \Omega}\big(X_i(\mathcal{F}_{r_{ji}})\,\mathrm{T}u\
   -
   \mathcal{F}_{r_{ij}}\,X_i(\mathrm{T}u)\big)
   \langle X_j(x), \nu\rangle\,\d H^{n-1},
   \end{split}
 \end{equation}
 where $\nu$ is the outward normal to $\Omega$ and $\nu_X$ is as in \eqref{eq.defnuX}.
 \end{proposition}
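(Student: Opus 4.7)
The plan is to adapt the proof of Proposition \ref{prop.PohoLL}, incorporating the new $r$-dependence via the commutation rule for $\mathrm{T}$ and a subsequent double integration by parts. Exactly as in \eqref{eq.stepIPoho}, I would start by applying the Divergence Theorem to the vector field $\mathcal{F}\cdot\mathrm{T}(x)$ and using $\mathrm{div}(\mathrm{T}(x))=q$, getting
\begin{equation*}
q\int_{\Omega}\mathcal{F}\,\d x+\int_{\Omega}\langle\nabla\mathcal{F},\mathrm{T}(x)\rangle\,\d x=\int_{\de\Omega}\mathcal{F}\,\langle\mathrm{T}(x),\nu\rangle\,\d H^{n-1}.
\end{equation*}
Expanding $\langle\nabla\mathcal{F},\mathrm{T}(x)\rangle$ by the chain rule now produces four contributions: the partial $\mathrm{T}(x\mapsto\mathcal{F})$ piece, $\de_z\mathcal{F}\cdot\mathrm{T}u$, the already-familiar $\sum_i\mathcal{F}_{p_i}\,\mathrm{T}(X_iu)$, and the genuinely new term $\sum_{i,j}\mathcal{F}_{r_{ij}}\,\mathrm{T}\bigl(X_j(X_iu)\bigr)$.

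To process the last two contributions, I would rely on the commutator identity $[X_i,\mathrm{T}]=X_i$ of Remark \ref{rem.relationTdil}, which at the operator level reads $\mathrm{T}X_i=X_i\mathrm{T}-X_i$. Applied once, it recovers \eqref{eq.FpnablaXucomm}; applied twice, it gives
\begin{equation*}
\mathrm{T}\bigl(X_j(X_iu)\bigr)=X_j\bigl(X_i(\mathrm{T}u)\bigr)-2\,X_j(X_iu),
\end{equation*}
which accounts precisely for the factor $-2\sum_{i,j}\int_\Omega\mathcal{F}_{r_{ij}}\,X_j(X_iu)\,\d x$ that sits on the left-hand side of \eqref{eq.PohoLLOrderdue}. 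What remains of the $r$-contribution is then the term $\sum_{i,j}\int_\Omega\mathcal{F}_{r_{ij}}\,X_j\bigl(X_i(\mathrm{T}u)\bigr)\,\d x$, which must be handled by a double application of the integration-by-parts formula \eqref{eq.intbypartsuv}.

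This is the organisational crux. I would first move $X_j$ off $X_i(\mathrm{T}u)$ onto $\mathcal{F}_{r_{ij}}$, and then move $X_i$ off $\mathrm{T}u$ onto $X_j(\mathcal{F}_{r_{ij}})$. The bulk outcome is $\sum_{i,j}\int_\Omega\mathrm{T}u\,X_i\bigl(X_j(\mathcal{F}_{r_{ij}})\bigr)\,\d x$, which combines with $\int_\Omega\mathrm{T}u\,\mathrm{div}_X(\mathcal{F}_p)\,\d x$ (produced by the same IBP applied to the $p$-term, just as in Proposition \ref{prop.PohoLL}) and with $\int_\Omega\mathrm{T}u\,\de_z\mathcal{F}\,\d x$ to give the single $\mathrm{T}u$-weighted bulk term in \eqref{eq.PohoLLOrderdue}. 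The two boundary contributions that arise come naturally with factors $\langle X_j(x),\nu\rangle$ and $\langle X_i(x),\nu\rangle$ respectively; a harmless relabelling $i\leftrightarrow j$ in the second one forces the index transposition $\mathcal{F}_{r_{ij}}\mapsto\mathcal{F}_{r_{ji}}$, which is precisely why \eqref{eq.PohoLLOrderdue} carries the asymmetric pairing $X_i(\mathcal{F}_{r_{ji}})\,\mathrm{T}u-\mathcal{F}_{r_{ij}}\,X_i(\mathrm{T}u)$ under the common factor $\langle X_j(x),\nu\rangle$.

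Finally, collecting the unchanged $\mathrm{T}(x\mapsto\mathcal{F})$ and $\mathcal{F}$-boundary pieces, the $p$-part treated verbatim as in Proposition \ref{prop.PohoLL} (producing the boundary term $-\int_{\de\Omega}\mathrm{T}u\,\langle\mathcal{F}_p,\nu_X\rangle$), and the new $r$-terms just described, yields exactly \eqref{eq.PohoLLOrderdue}. The main obstacle, I expect, is not conceptual but notational: because the $X_i$'s do not commute (Remark \ref{rem.HXnonsym}), the order in which the two derivatives are peeled off matters, and one has to keep careful track of which of the two boundary integrals inherits the transposed $r$-index. Beyond the pyramid-shape already exploited in \eqref{eq.intbypartsuv}, no new analytic ingredient is needed; the entire argument is a careful bookkeeping of the first-order proof plus one extra layer of integration by parts.
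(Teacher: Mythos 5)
Your proposal reproduces exactly the paper's argument: the Divergence Theorem applied to $\mathcal{F}\cdot\mathrm{T}(x)$, the chain-rule expansion of $\langle\nabla\mathcal{F},\mathrm{T}(x)\rangle$, the twice-applied commutator $[X_i,\mathrm{T}]=X_i$ producing $\mathrm{T}(X_j(X_iu))=X_j(X_i(\mathrm{T}u))-2X_j(X_iu)$, and then the $p$-term treated as in Proposition \ref{prop.PohoLL} followed by a double integration by parts on the remaining $r$-term. Your identification of where the $i\leftrightarrow j$ relabelling enters to produce the asymmetric index pairing $X_i(\mathcal{F}_{r_{ji}})$ versus $\mathcal{F}_{r_{ij}}$ in the boundary integral is also the right bookkeeping, so the proposal is correct and follows the paper's route essentially verbatim.
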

 \begin{proof}
  We argue essentially as in the proof of Proposition \ref{prop.PohoLL}.
  First of all, since $\Omega$ is regular for the Divergence
  Theorem and $\mathrm{div}(\Tv(x)) = q$, we can write
  \begin{equation} \label{eq.stepIPohoOrderDue}
  \begin{split}
   & q\int_{\Omega}\mathcal{F}\,\d x
   + \int_{\Omega}\langle \nabla \mathcal{F}, \Tv(x)\rangle\,\d x
   = \int_{\de\Omega}\mathcal{F}\,\langle \mathrm{T}(x),\nu\rangle\,\d H^{n-1},
  \end{split}
  \end{equation}
  where we have used the notation
  $$\nabla\mathcal{F} = 
  \nabla\big(x\mapsto\mathcal{F}\big(x,u(x),\nabla_Xu(x), \HH_Xu(x)\big)\big).$$
  On the other hand, using identity \eqref{eq.FpnablaXucomm}
  in the proof of Proposition \ref{prop.PohoLL}, we get
  \begin{equation} \label{eq.FpnablaXustepI}
  \begin{split}
  \langle \nabla \mathcal{F}, \Tv(x)\rangle
  & = \Tv\big(x\mapsto\mathcal{F}(x,z,p,r)\big)\big(x,u(x),\nabla_X u(x),\HH_Xu(x)\big) \\[0.1cm]
  & +\mathcal{F}_u\cdot \Tv u(x)+
  \langle \mathcal{F}_p,\nabla_X(\Tv u)\rangle -
  \langle \mathcal{F}_p,\nabla_Xu\rangle \\[0.1cm]
  & + \textstyle\sum_{i,j = 1}^m
  \FF_{r_{ij}}\cdot \Tv\big(X_j(X_iu)\big).
   \end{split}
   \end{equation}
   Now, reminding that $[X_i,\Tv] = X_i$ for every $i = 1,\ldots,m$ (as
   $X_1,\ldots,X_m$ are $\delta_\lambda$-ho\-mo\-ge\-neo\-us of degree $1$,
   see Remark \ref{rem.relationTdil}), we have
   \begin{equation} \label{eq.Frijcommutator}
   \begin{split}
    \FF_{r_{ij}}\cdot \Tv\big(X_j(X_iu)\big) & = 
    \FF_{r_{ij}}\cdot\Big(X_j\big(\Tv(X_iu)\big)
    +[\Tv,X_j](X_iu)\Big) \\
    & = \FF_{r_{ij}}\cdot\Big(X_j\big(X_i(\Tv u)
    + [\Tv, X_i]u\big)
    -X_j(X_iu)\Big)  \\
    & = \FF_{r_{ij}}\cdot\Big(X_j\big(X_i(\Tv u)\big)
    - 2X_j(X_iu)\Big);
   \end{split}
   \end{equation}
   as a consequence, by combining 
   \eqref{eq.stepIPohoOrderDue}, \eqref{eq.FpnablaXustepI} and 
   \eqref{eq.Frijcommutator} we obtain
 \begin{equation} \label{eq.PohoLLOrderdueSemiFinal}
  \begin{split}
   & \int_{\Omega}\big(q\mathcal{F}-\langle \nabla_p \mathcal{F}, \nabla_X u\rangle\big)\,\d x
   +
   \int_{\Omega}\mathrm{T}\big(x\mapsto\mathcal{F}(x,z,p,r)\big)(x,u,\nabla_Xu,\HH_X u)\, \d x 
   \\
   & \quad +
   \int_{\Omega}\Tv u\cdot\FF_u\,\d x
   + \int_{\Omega}\big[\langle \mathcal{F}_p,\nabla_X(\Tv u)\rangle
   + \textstyle\sum_{i,j=1}^m 
   \FF_{r_{ij}}\cdot X_j\big(X_i(\Tv u)\big)\big]\,\d x
   \\
   & \quad - 2\sum_{i,j = 1}^m\int_{\Omega}\mathcal{F}_{r_{ij}}\,X_j(X_i u)\, \d x
   =
   \int_{\de\Omega}\mathcal{F}\,\langle \mathrm{T}(x),\nu\rangle\,\d H^{n-1}.
   \end{split}
 \end{equation}
 To proceed further, we integrate by parts 
 in the fourth integral in the left-hand side
 of \eqref{eq.PohoLLOrderdueSemiFinal}.
 First, using identity
 \eqref{eq.afterIntegrPart} in the proof of Proposition \ref{prop.PohoLL}, we get
 \begin{equation} \label{eq.byPartsPropPrecedente}
 \int_{\Omega}
 \langle \mathcal{F}_p,\nabla_X(\Tv u)\rangle\,\d x
 = \int_{\de\Omega}\Tv u\,\langle\FF_p,\nu_X\rangle\,\d H^{n-1}
 + \int_{\Omega}\Tv u\cdot\mathrm{div}_X(\FF_p)\,\d x;
 \end{equation}
 moreover, by repeatedly exploiting formula
 \eqref{eq.intbypartsuv} in Remark \ref{rem.conseqH}-(3), we have
 \begin{equation} \label{eq.byPartsNuovo}
 \begin{split}
  & \int_{\Omega}\FF_{r_{ij}}\cdot X_j\big(X_i(\Tv u)\big)\,\d x \\
  & \quad = 
  \int_{\de \Omega} \mathcal{F}_{r_{ij}}\cdot X_i(\Tv u)\,
  \langle X_j(x), \nu\rangle\,\d H^{n-1}-
  \int_{\Omega}X_j(\FF_{r_{ij}})\,X_i(\Tv u)\,\d x
  \\
  & \quad =  \int_{\de \Omega} \mathcal{F}_{r_{ij}}\cdot X_i(\Tv u)\,
  \langle X_j(x), \nu\rangle\,\d H^{n-1} \\
  & \qquad -
  \bigg(
  \int_{\de\Omega}
  X_j(\FF_{r_{ij}})\,\Tv u\,\langle X_i(x),\nu\rangle\,\d H^{n-1}
  - \int_{\Omega}\Tv u\cdot X_i\big(X_j(\FF_{r_{ij}})\big)\,\d x
  \bigg).
 \end{split}
 \end{equation}
 By inserting \eqref{eq.byPartsPropPrecedente} and
 \eqref{eq.byPartsNuovo} into 
 \eqref{eq.PohoLLOrderdueSemiFinal}, we finally obtain
 \eqref{eq.PohoLLOrderdue}.
  \end{proof}
  \begin{example} \label{exm.orderdueLapl}
   In Euclidean space $\R^n$, let $m < n$ and let $X = \{X_1,\ldots,X_m\}$
   satisfy assumptions (H.1)-(H.2).
   Moreover, let $\Omega\subseteq\R^n$ be a fixed open set, regular
  for the Divergence Theorem. We then consider the function
  $$\FF:\Omega\times\R\times\R^n\times\R^{m^2}\to\R,
  \qquad
  \FF(x,z,p,r) := \frac{1}{2}\big(\textstyle\sum_{i = 1}^mr_{ii}\big)^2
  - G(z),$$
  where $G\in C^1(\R)$ and $G(0) = 0$. In this context, identity
  \eqref{eq.PohoLLOrderdue} boils down to
  \begin{equation*}
   \begin{split}
   & \Big(\frac{q}{2}-2\Big)\int_{\Omega}(\Delta_Xu)^2\,\d x
   -q\,\int_{\Omega}G(u)\,\d x
   +
   \int_{\Omega}\mathrm{T}u\,\big(
   \Delta_X^2u-G'(u)\big)\,\d x
   \\[0.1cm]
   & \quad = 
   \int_{\de\Omega}\mathcal{F}\,\langle \mathrm{T}(x),\nu\rangle\,\d H^{n-1} 
   \\
   & \quad\qquad +\sum_{i = 1}^m 
   \int_{\de \Omega}\big(X_i(\Delta_Xu)\,\mathrm{T}u\
   -
   \Delta_Xu\,X_i(\mathrm{T}u)\big)
   \langle X_i(x), \nu\rangle\,\d H^{n-1},
   \end{split}
  \end{equation*}
  where $\Delta_{X}u$ is the \emph{horizontal Laplacian} of $u$, that is, 
  $$\Delta_{X}u = -\textstyle\sum_{i = 1}^mX_i^2u,$$
  and $q,\Tv,\nu_X$ are given, respectively, by \eqref{eq.defq},
  \eqref{eq.defiVFT}, \eqref{eq.defnuX}.
  \end{example}
  With Proposition \ref{prop.PohoOderdue} at hand, we could try
  to rerun the arguments of the previous section in order
  to establish a non-existence result for the PDE
  \eqref{eq.PDEsecondorder}, i.e.,
  \begin{equation} \label{eq.PDEorderduevicina}
  \begin{split}
  \sum_{i,j=1}^m X_iX_j&\Big(\mathcal{F}_{r_{ij}}\big(x,u,\nabla_X u, \HH_X u)\Big)
   +\mathrm{div}_X\Big(\mathcal{F}_p
 \big(x,u(x),\nabla_X u(x),\HH_Xu\big)\Big) \\
 & \qquad\qquad +
 \mathcal{F}_z\big(x,u,\nabla_X u,\HH_X u\big) = 0.
  \end{split}
  \end{equation}
  In fact, if $u\in C^4(\overline{\Omega})$ solves \eqref{eq.PDEorderduevicina},
  from identity \eqref{eq.PohoLLOrderdue} we get
  \begin{align*}
  & \int_{\Omega}\big(q\mathcal{F}-\langle \nabla_p \mathcal{F}, \nabla_X u\rangle\big)\,\d x
   +
   \int_{\Omega}\mathrm{T}\big(x\mapsto\mathcal{F}(x,z,p,r)\big)(x,u,\nabla_Xu,\HH_X u)\, \d x 
   \\[0.1cm]
   & \qquad\quad - 2\sum_{i,j = 1}^m\int_{\Omega}\mathcal{F}_{r_{ij}}\,X_j(X_i u)\, \d x
   \\[0.1cm]
   & \quad = 
   \int_{\de\Omega}\mathcal{F}\,\langle \mathrm{T}(x),\nu\rangle\,\d H^{n-1} - 
   \int_{\de\Omega}\mathrm{T}u\,\langle \FF_p, \nu_X\rangle\,\d H^{n-1} \\[0.1cm]
   & \qquad\quad
   +\sum_{i,j = 1}^m 
   \int_{\de \Omega}\big(X_i(\mathcal{F}_{r_{ji}})\,\mathrm{T}u\
   -
   \mathcal{F}_{r_{ij}}\,X_i(\mathrm{T}u)\big)
   \langle X_j(x), \nu\rangle\,\d H^{n-1}.
  \end{align*}
  Moreover, if we further assume that $u\equiv 0$ on $\de\Omega$,
  we already know from the proof of Theorem \ref{thm.IntegralPDE} that,
  for every $x\in\de\Omega$,
  the following equality holds:
  \begin{equation} \label{eq.orineunosulbordo}
   \Tv u\,\langle \mathcal{F}_p, \nu_X\rangle = 
  \langle \Tv(x),\nu\rangle\cdot \langle\FF_p,\nabla_X u\rangle.
  \end{equation}
  Gathering together these facts, we then obtain the identity
  \begin{equation} \label{eq.intermediatenonexistenceOderdue}
   \begin{split}
   & \int_{\Omega}\big(q\mathcal{F}-\langle \nabla_p \mathcal{F}, \nabla_X u\rangle\big)\,\d x
   +
   \int_{\Omega}\mathrm{T}\big(x\mapsto\mathcal{F}(x,z,p,r)\big)(x,u,\nabla_Xu,\HH_X u)\, \d x 
   \\[0.1cm]
   & \qquad\quad - 2\sum_{i,j = 1}^m\int_{\Omega}\mathcal{F}_{r_{ij}}\,X_j(X_i u)\, \d x
   \\[0.1cm]
   & \quad = 
   \int_{\de\Omega}\langle \mathrm{T}(x),\nu\rangle\,
   \big(\mathcal{F}-\langle\FF_p,\nabla_X u\rangle\big)\,\d H^{n-1} \\[0.1cm]
   & \qquad\quad
   +\sum_{i,j = 1}^m 
   \int_{\de \Omega}\big(X_i(\mathcal{F}_{r_{ji}})\,\mathrm{T}u\
   -
   \mathcal{F}_{r_{ij}}\,X_i(\mathrm{T}u)\big)
   \langle X_i(x), \nu\rangle\,\d H^{n-1}.
   \end{split}
  \end{equation}
  Now, following the arguments in the previous section, it is clear that
  identity \eqref{eq.intermediatenonexistenceOderdue} leads to a non-existence
  result for \eqref{eq.PDEorderduevicina} (which a fourth-order PDE)
  if we couple the PDE with \emph{two} boundary conditions, namely
  \begin{equation} \label{eq.boundaryOderdue}
   \text{$u\equiv 0$ on $\de\Omega$}\qquad\text{and}\qquad
  \text{$\mathcal{B}u \equiv 0$ on $\de\Omega$},
  \end{equation}
  where $\mathcal{B}$ is a suitable first-order operator.
  On the other hand, for the `geometrical' assumption
  that $\Omega$ is $\delta_\lambda$-star shaped to play a r\^ole,
  we need to choose $\mathcal{B}$ in such a way that, for every
  solution $u\in C^4(\overline{\Omega})$ satisfying \eqref{eq.boundaryOderdue},
  one has
  \begin{equation} \label{eq.conditionboundaryB}
  \begin{split}
   & \sum_{i,j = 1}^m\big(X_i(\mathcal{F}_{r_{ji}})\,\mathrm{T}u\
   -
   \mathcal{F}_{r_{ij}}\,X_i(\mathrm{T}u)\big)
   \langle X_i(x), \nu\rangle \\
  & \qquad
   = \langle \mathrm{T}(x),\nu\rangle f\big(x,u,\nabla_Xu,\HH_Xu\big),
   \end{split}
  \end{equation}
   where $f = f(x,z,p,r)$ is a suitable function only depending on $\FF$
   and on its de\-ri\-va\-ti\-ves.
   We explicitly point out that, when $\FF$ does not depend on
   $r$, the analog of \eqref{eq.conditionboundaryB} is 
   identity \eqref{eq.orineunosulbordo} (holding true
   under the condition $u\equiv 0$ on $\de\Omega$), where
   $$f(x,z,p) = \langle p,\FF_p(x,z,p)\rangle.$$
   Quite unnaturally, we are able to prove an identity like  
   \eqref{eq.conditionboundaryB} if we assume that
   $\mathcal{B} = \nabla$, that is, if we consider the
   boundary condition
   \begin{equation} \label{eq.boundarynonintrinseca}
    \text{$\nabla u\equiv 0$ on $\de\Omega$}\,\,\Longleftrightarrow\,\,
   \text{$\de_{x_i}u\equiv 0$ on $\de\Omega$ for all $i = 1,\ldots,n$}.
   \end{equation}
   The reason why such a boundary condition is `unnatural' is that
   it is not intrinsically defined trough the vector fields $X_1,\ldots,X_m$ associated
   with $\FF$; in this spirit, a more `natural' condition should involve
   the $X$-gradient of $u$, that is,
   $$\text{$\nabla_Xu \equiv 0$ on $\de\Omega$}.$$
   We believe that the problem of finding adequate \emph{intrinsic} boundary
   conditions which lead
   to non-existence results involving the same notion of $\delta_\lambda$-star
   shapedness used in the previous section could be an interesting challenge.
   \medskip
   
   For the sake of completeness, we end this section by proving the non-existence
   result arising from Proposition \ref{prop.PohoOderdue} under the
   `non-intrinsic' condition \eqref{eq.boundarynonintrinseca}.
    \begin{theorem} \label{thm.nonexistenceOrderdue}
     Let $\Omega\subseteq\R^n$ be a \emph{connected} open set, regular for the Divergence Theorem
   and $\delta_\lambda$-star shaped with respect to the origin. 
   We assume that
   \begin{itemize}
    \item[(i)] 
    for every $x\in\de\Omega$ and every $r\in\R^{m^2}$ we have
    \begin{equation} \label{eq.signleqzeroOrderdue}
    \FF(x,0,0,r)-\textstyle\sum_{i,j = 1}^mr_{ij}\,\FF_{r_{ij}}(x,0,0,r)\leq 0;
    \end{equation} 
    \vspace*{0.07cm}
    
    \item[(ii)] for every $x\in\Omega$ and every $(z,p,r)\in\R\times\R^m\times\R^{m^2}$
    we have
    \begin{equation} \label{eq.signFFnonexistenceOrderdue}
    \begin{split}
     & q\FF(x,z,p,r) - \langle p,\FF_p(x,z,p,r)\rangle  \\
     & \qquad +\mathrm{T}\big(x\mapsto\mathcal{F}(x,z,p,r)\big) - 2
     \textstyle\sum_{i,j = 1}^mr_{ij}\FF_{r_{ij}}(x,z,p,r)
     \geq 0
     \end{split}
    \end{equation}
    \vspace*{0.07cm}
    
    \item[(iii)] either $z = 0$ or $p = 0$ or $r = 0$ when \eqref{eq.signFFnonexistenceOrderdue}
    holds.
   \end{itemize}
   Then, the boundary-value problem
   \begin{equation} \label{eq.BVPoder2}
    \begin{cases}
    \Big(\textstyle\sum_{i,j=1}^m X_iX_j(\mathcal{F}_{r_{ij}})
   +\mathrm{div}_X(\mathcal{F}_p)+
 \mathcal{F}_z\Big)\big(x,u,\nabla_X u,\HH_X u\big) = 0 & \text{in $\Omega$}, \\[0.15cm]
   u\big|_{\de\Omega} = 0, \\[0.15cm]
   \nabla u\big|_{\de\Omega} = 0,
   \end{cases}
   \end{equation}
   has no non-trivial solutions $u\in C^4(\overline{\Omega})$. 
    \end{theorem}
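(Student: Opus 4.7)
The plan is to mirror the strategy of Theorem \ref{thm.mainNonexistOrder1}, taking as the starting point the identity \eqref{eq.intermediatenonexistenceOderdue}, which already incorporates that $u$ solves \eqref{eq.PDEorderduevicina} and that $u\equiv 0$ on $\de\Omega$. The task therefore reduces to exploiting the second boundary condition $\nabla u\equiv 0$ on $\de\Omega$ in order to bring the right-hand side of \eqref{eq.intermediatenonexistenceOderdue} into a form where assumption (i) and the $\dela$-star shapedness of $\Omega$ can be applied, and then using assumption (ii) to force vanishing and assumption (iii) to rule out non-trivial solutions.

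The decisive step is the explicit computation of the boundary terms. Since every Euclidean partial of $u$ vanishes on $\de\Omega$, we immediately obtain $\Tv u\equiv 0$, $\nabla_X u\equiv 0$ and hence $\langle \FF_p,\nabla_X u\rangle\equiv 0$ on $\de\Omega$. Moreover, the standard remark that a function with vanishing Euclidean gradient on a $C^1$ hypersurface has Hessian of the form $D^2 u|_{\de\Omega} = (\de_\nu^2 u)\,\nu\otimes\nu$ yields, through a direct calculation using the pyramid-shape expansion $X_i=\sum_k a_{k,i}(x)\de_{x_k}$, the pointwise identities
\begin{equation*}
X_j(X_i u) = (\de_\nu^2 u)\,\langle X_i(x),\nu\rangle\,\langle X_j(x),\nu\rangle,
\qquad X_i(\Tv u) = (\de_\nu^2 u)\,\langle \Tv(x),\nu\rangle\,\langle X_i(x),\nu\rangle
\end{equation*}
on $\de\Omega$. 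Substituting these into the last sum of \eqref{eq.intermediatenonexistenceOderdue}, the summand $X_i(\FF_{r_{ji}})\,\Tv u$ drops out, and what remains factors as $-\int_{\de\Omega}\langle\Tv(x),\nu\rangle\sum_{i,j}r_{ij}\FF_{r_{ij}}(x,0,0,r)\,\d H^{n-1}$, because on $\de\Omega$ one has $r_{ij}=X_j(X_i u)=(\de_\nu^2 u)\langle X_i,\nu\rangle\langle X_j,\nu\rangle$. Combining this with the $\FF$-term (and using $\langle \FF_p,\nabla_X u\rangle\equiv 0$ on $\de\Omega$) collapses the right-hand side of \eqref{eq.intermediatenonexistenceOderdue} to
\begin{equation*}
\int_{\de\Omega}\langle\Tv(x),\nu\rangle\,\bigl[\FF(x,0,0,r)-\textstyle\sum_{i,j}r_{ij}\FF_{r_{ij}}(x,0,0,r)\bigr]\,\d H^{n-1},
\end{equation*}
which is $\leq 0$ by assumption (i) and by $\langle\Tv(x),\nu\rangle\geq 0$.

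The rest of the argument is now structurally identical to the second-order case. Hypothesis (ii) guarantees that the bulk integrand on the left-hand side of \eqref{eq.intermediatenonexistenceOderdue} is pointwise non-negative; the inequality between the two sides is therefore an equality, and by continuity the bulk integrand must vanish identically on $\Omega$. Hypothesis (iii) then gives, at each $x\in\Omega$, that $u(x)=0$, $\nabla_X u(x)=0$, or $\HH_X u(x)=0$. If $u\equiv 0$ the claim is trivial; if $\nabla_X u\equiv 0$, the Chow-Rashevsky connectivity argument from the proof of Theorem \ref{thm.mainNonexistOrder1} combined with the Dirichlet condition forces $u\equiv 0$; and the third alternative $\HH_X u\equiv 0$ is handled by a similar connectivity argument, now exploiting both boundary conditions in \eqref{eq.BVPoder2}.

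The main obstacle is the first step: the algebraic reduction of the boundary terms, where one has to recognize that the strong condition $\nabla u\equiv 0$ on $\de\Omega$ (and not merely $\nabla_X u\equiv 0$) is precisely what allows the $X$-Hessian on $\de\Omega$ to factor through the rank-one tensor $\nu_X\otimes\nu_X$, so that the sum $\sum_{i,j}\FF_{r_{ij}}\,X_i(\Tv u)\langle X_j,\nu\rangle$ reassembles into $\sum r_{ij}\FF_{r_{ij}}(x,0,0,r)$. This is exactly the structural reason — discussed informally in the authors' remarks preceding the theorem — why the full Euclidean boundary condition, rather than the intrinsic one $\nabla_X u\equiv 0$, is imposed here.
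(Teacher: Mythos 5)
Your proof is correct and follows essentially the same route as the paper's: starting from identity \eqref{eq.intermediatenonexistenceOderdue}, exploiting $\nabla u\equiv 0$ on $\de\Omega$ to kill the $\FF_p$ and $\Tv u$ boundary terms, factoring the remaining boundary contribution through the rank-one Hessian $D^2u|_{\de\Omega}=(\de_\nu^2u)\,\nu\otimes\nu$, and then running the sign/vanishing/case-analysis argument. Your two pointwise identities for $X_j(X_iu)$ and $X_i(\Tv u)$ on $\de\Omega$ are exactly equivalent to the paper's single key claim $X_i(\Tv u)\langle X_j(x),\nu\rangle=\langle\Tv(x),\nu\rangle X_j(X_iu)$, and the treatment of the three alternatives via Chow–Rashevsky connectivity matches the paper's.
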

   \begin{proof}
    Let $u\in C^4(\overline{\Omega})$ be a solution of 
    \eqref{eq.BVPoder2}. We need to prove that
    \begin{equation} \label{eq.toproveOrderdue}
     \text{$u\equiv 0$ on $\Omega$}.
    \end{equation}
    We first observe that, since $\nabla u\equiv 0$ on $\de\Omega$, we obviously have
    $$\nabla_X u\equiv 0\quad\text{and}\quad \Tv u = \langle \Tv(x),\nabla u\rangle \equiv 0
    \qquad\text{on $\de\Omega$}.$$
    As a consequence, since
    $u\equiv 0$ on $\de\Omega$, from identity \eqref{eq.intermediatenonexistenceOderdue} 
    we get
    \begin{equation} \label{eq.tosubstituteboundary}
     \begin{split}
     & \int_{\Omega}\big(q\mathcal{F}-\langle \nabla_p \mathcal{F}, \nabla_X u\rangle\big)\,\d x
   +
   \int_{\Omega}\mathrm{T}\big(x\mapsto\mathcal{F}(x,z,p,r)\big)(x,u,\nabla_Xu,\HH_X u)\, \d x 
   \\[0.1cm]
   & \qquad\quad - 2\sum_{i,j = 1}^m\int_{\Omega}\mathcal{F}_{r_{ij}}\,X_j(X_i u)\, \d x
   \\[0.1cm]
   & \quad = 
   \int_{\de\Omega}\langle \mathrm{T}(x),\nu\rangle\,
   \mathcal{F}\,\d H^{n-1} 
   -\sum_{i,j = 1}^m 
   \mathcal{F}_{r_{ij}}X_i(\mathrm{T}u)
   \langle X_j(x), \nu\rangle\,\d H^{n-1}.
     \end{split}
    \end{equation}
    We now claim that, for every $x\in\de\Omega$ and every $i,j = 1,\ldots,m$, one has
    \begin{equation} \label{eq.toproveBoundaryOrderdue}
     X_i(\mathrm{T}u)
   \langle X_j(x), \nu\rangle = \langle \Tv (x),\nu\rangle\cdot X_j(X_iu).
    \end{equation}
    In fact, using the very definition of $\Tv$, and writing
    $$\textstyle X_t = \sum_{h = 1}^na_{h,t}(x)\de_{x_h}\qquad
    (\text{for all $t = 1,\ldots,m$}),$$
    we obtain the following chain of equality
    (holding true for all $x\in\de\Omega$):
    \begin{align*}
     & X_i(\mathrm{T}u)
   \langle X_j(x), \nu\rangle
    = 
   \sum_{h,k = 1}^n a_{h,i}(x)\,\de_{x_h}(\Tv u)\,a_{k,j}(x)\cdot\nu_k
   \\
   & \quad = \sum_{h,k,l = 1}^na_{h,i}(x)\,\de_{x_h}
   \big(\sigma_l\,x_l\,\de_{x_l}u\big)\,a_{k,j}(x)\cdot\nu_k
   \\
   & \quad
   = \sum_{h,k = 1}^n \sigma_h\,a_{h,i}(x)\,(\de_{x_h}u)\,a_{k,j}(x)\cdot\nu_k
   +\sum_{h,k,l = 1}^n
   \sigma_l\,x_l\,a_{h,i}(x)\,\de^2_{x_hx_l}u\,a_{k,j}(x)\cdot\nu_k
   \\[0.15cm]
   & \quad (\text{since $\de_{x_h}u\equiv 0$ on $\de\Omega$ for every $h = 1,\ldots,n$}) \\
   & \quad
   = \sum_{h,k,l = 1}^n
   \sigma_l\,x_l\,a_{h,i}(x)\,\de^2_{x_hx_l}u\,a_{k,j}(x)\cdot\nu_k =: (\bigstar).
    \end{align*}
   On the other hand, since $u\equiv \nabla u\equiv 0$ on $\de\Omega$,
   it is very easy to see that
   $$\frac{\de^2 u}{\de {x_h}\de {x_l}}u = \frac{\de^2u}{\de \nu^2}\,\nu_h\nu_l\qquad
   \text{on $\de\Omega$};$$
   thus, by crucially exploiting this information, we get
   \begin{align*}
    (\bigstar) & =
    \Big(\sum_{l = 1}^n\sigma_l\,x_l\,\nu_l\Big)\cdot
    \sum_{h,k = 1}^na_{h,i}(x)\,a_{k,j}(x)\,\Big(
    \frac{\de^2u}{\de \nu^2}\,\nu_h\nu_k\Big) \\
    & = \langle \Tv (x),\nu\rangle\cdot
    \sum_{h,k = 1}^na_{h,i}(x)\,a_{k,j}(x)\,\de_{x_h x_k}u
    \\
    & (\text{since $\de_{x_h}u\equiv 0$ on $\de\Omega$ for all $h = 1,\ldots,n$}) \\
    & = \langle \Tv (x),\nu\rangle\cdot
    \sum_{k = 1}^na_{k,j}(x)\,\de_{x_k}\Big(\sum_{h = 1}^na_{h,i}(x)\,\de_{x_h}u\Big)
    \\[0.15cm]
    & = \langle \Tv (x),\nu\rangle\cdot X_j(X_iu),
   \end{align*}
   which is precisely the claimed \eqref{eq.toproveBoundaryOrderdue}.
   \vspace*{0.07cm}
   
   With \eqref{eq.toproveBoundaryOrderdue} at hand, we can proceed with the proof
   of the theorem. Indeed, by combining the cited \eqref{eq.toproveBoundaryOrderdue} 
   with \eqref{eq.tosubstituteboundary}, we get
   \begin{align*}
    & \int_{\Omega}\big(q\mathcal{F}-\langle \nabla_p \mathcal{F}, \nabla_X u\rangle\big)\,\d x
   +
   \int_{\Omega}\mathrm{T}\big(x\mapsto\mathcal{F}(x,z,p,r)\big)(x,u,\nabla_Xu,\HH_X u)\, \d x 
   \\[0.1cm]
   & \qquad\quad - 2\sum_{i,j = 1}^m\int_{\Omega}\mathcal{F}_{r_{ij}}\,X_j(X_i u)\, \d x
   \\[0.1cm]
   & \quad = 
   \int_{\de\Omega}\langle \mathrm{T}(x),\nu\rangle\big(
   \mathcal{F}-\textstyle\sum_{i,j = 1}^mX_j(X_iu)\,\FF_{r_{ij}}\big)\,\d H^{n-1};
   \end{align*}
   from this, using assumptions (i)-(ii) and reminding that
   $\Omega$ is $\delta_\lambda$-star shaped
   with respect to the origin (see Definition \ref{def.starshaped}), we obtain
   \begin{align*}
   & q\FF - \langle \nabla_X,\FF_p\rangle  
   +\mathrm{T}\big(x\mapsto\mathcal{F}(x,z,p,r)\big)(x,u,\nabla_X,\HH_Xu)
   \\[0.1cm]
   &\qquad - 2
     \textstyle\sum_{i,j = 1}^mX_j(X_iu)\FF_{r_{ij}} = 0
     \qquad\text{for all $x\in\Omega$}.
     \end{align*}
   Now, according to assumption (iii), only three cases can occur:
   \medskip
   
    (a)\,\,$u\equiv 0$ on $\Omega$. In this case, \eqref{eq.toproveOrderdue} is
    satisfied and the proof is complete. \medskip
    
    (b)\,\,$\nabla_X u \equiv 0$ on $\Omega$. In this case,
    we have already recognized in the demonstration of Th\-eorem \ref{thm.mainNonexistOrder1}
    that $u$ must be constant in $\Omega$. Since, by assumption,
    $u\equiv 0$ on $\de\Omega$, we conclude that
    $u\equiv 0$ throughout $\Omega$, and 
    \eqref{eq.toproveOrderdue} is again satisfied. \medskip
    
    (c)\,\,$\HH_Xu \equiv 0$ on $\Omega$. In this case, bearing in mind
    the definition of $\HH_X$,
    we get
    $$\text{$\nabla_X(X_iu) \equiv 0$ on $\Omega$ for every $i = 1,\ldots,n$};$$
    from this, by arguing exactly as in the proof of Theorem \ref{thm.mainNonexistOrder1},
    we deduce that $\nabla_Xu$ must be constant on $\Omega$.
    On the other hand, since (by assumption) $\nabla u \equiv 0$ on $\de\Omega$,
    we necessarily have that $\nabla_X u \equiv 0$ on $\de\Omega$ as well; as a consequence,
    $$\nabla_Xu\equiv 0\qquad\text{on $\de\Omega$}.$$
    From (b) we then conclude that $u\equiv 0$ on $\Omega$,
    and \eqref{eq.toproveOrderdue} is again satisfied.
   \end{proof}
   \begin{remark} \label{rem.functionfproof}
    With reference to discussion before the statement of Theorem \ref{thm.nonexistenceOrderdue},
    we point out that identity 
    \eqref{eq.toproveBoundaryOrderdue} leads to the
    needed \eqref{eq.conditionboundaryB}: in fact, we have
    \begin{align*}
    & \textstyle\sum_{i,j = 1}^m\big(X_i(\mathcal{F}_{r_{ji}})\,\mathrm{T}u\
   -
   \mathcal{F}_{r_{ij}}\,X_j(\mathrm{T}u)\big)
   \langle X_j(x), \nu\rangle \\[0.1cm]
  & \qquad
   = -\textstyle\sum_{i,j = 1}^m
   \mathcal{F}_{r_{ij}}\,X_j(\mathrm{T}u)
   \langle X_j(x), \nu\rangle
   = -\langle \Tv (x),\nu\rangle\cdot\textstyle\sum_{i,j = 1}^m
   \FF_{r_{ij}}\,X_j(X_iu),
    \end{align*}
    and this is precisely identity \eqref{eq.conditionboundaryB} with the choice
    $$f(x,z,p,r) := -\sum_{i,j = 1}^mr_{ij}\FF_{r_{ij}}.$$
   \end{remark}


\begin{thebibliography}{xxx}
 \bibitem{BGibbons}
 S. Biagi:
 \emph{On the Gibbons conjecture for homogeneous H\"ormander operators},
 NoDEA Nonlinear Differential Equations Appl. \textbf{26} (2019), 26--49.

 \bibitem{BBHeat}
 S. Biagi, A. Bonfiglioli:
 \emph{Global Heat Kernels for Parabolic Homogeneous H\"or\-man\-der O\-pe\-ra\-tors},
  arXiv:1910.09907 (2019). 
 
 \bibitem{BBFundSol}
  S. Biagi, A. Bonfiglioli:
   \emph{The existence of a global fundamental solution for
   homogeneous H\"or\-man\-der operators via a global lifting method},
   Proc.\,Lond.\,Math.\,Soc.\,\textbf{5} (2017), 855-?889.
 
 \bibitem{BiBoBra}
 S. Biagi, A. Bonfiglioli, M. Bramanti:
  \emph{Global estimates for the fundamental solution 
  of ho\-mo\-ge\-neous H\"or\-man\-der sums of squares},
  arXiv:1906.07836 (2019).
  
 \bibitem{BiBoBra2}
 S. Biagi, A. Bonfiglioli, M. Bramanti:
  \emph{Global estimates in Sobolev spaces for homogeneous H\"or\-man\-der sums of squares},
  arXiv:1906.07835 (2019).
  
 \bibitem{BiBra}
 S. Biagi, M. Bramanti:
  \emph{Global Gaussian estimates for the heat kernel of homogeneous sums of squares},
  arXiv:2003.09845 (2020).
  
\bibitem{BiLa}
  S. Biagi, E.Lanconelli:
  \emph{Large sets at infinity and {M}aximum {P}rinciple on unbounded
              domains for a class of sub-elliptic operators},
  J. Differential Equations \textbf{269} (2020), 9680--9719.
 
 \bibitem{BBBook}
 S. Biagi, A. Bonfiglioli: 
 `An in\-tro\-duction
  to the Geo\-me\-tri\-cal A\-na\-ly\-sis of Vector Fields. With
  Ap\-pli\-ca\-tions To Ma\-xi\-mum Prin\-ci\-ples And Lie
  Groups', 
  World Scien\-ti\-fic Pu\-bli\-shing Com\-pany, 2018.
    
  \bibitem{BLUlibro} A. Bonfiglioli, E. Lanconelli, F. Uguzzoni:
  `Stratified Lie Groups and Potential Theory for their
  sub-Laplacians', Springer Monographs in Mathematics
  \textbf{26}, Springer, New York, N.Y., 2007.

  \bibitem{Bony}
  J.-M. Bony: 
 \emph{Principe du maximum, in\'{e}galite de {H}arnack et unicit\'{e} du
              probl\`eme de {C}auchy pour les op\'{e}rateurs elliptiques
              d\'{e}g\'{e}n\'{e}r\'{e}s},
  Ann. Inst. Fourier (Grenoble) \textbf{19} (1969), 277--304.
  
  \bibitem{BozMit}
 Y. Bozhkov, E. Mitidieri:
 \emph{Conformal Killing vector fields and Rellich type identities on Riemannian manifolds. I},
 Geometric methods in PDE's, 65–80,
 Lect. Notes Semin. Interdiscip. Mat., 7, Semin. Interdiscip. Mat. (S.I.M.), Potenza, 2008.  

  \bibitem{BozMit2}
 Y. Bozhkov, E. Mitidieri:
 \emph{Conformal Killing vector fields and Rellich type identities on Riemannian manifolds, II},
 Mediterr. J. Math. {\bf 9}(1) (2012), 1--20.
 
  
  \bibitem{Cowan}
  C. Cowan:
  \emph{A short remark regarding Pohozaev-type results on general domains assuming finite Morse index},
   Proc. Roy. Soc. Edinburgh Sect. A {\bf 147}(2) (2017), 293--297. 

  \bibitem{Dancer}
  E.N. Dancer, K. Zhang:
  \emph{Uniqueness of solutions for some elliptic equations and systems in nearly star-shaped domains},
  Nonlinear Anal. TMA {\bf 41}, (2000), 745--761.
 
\bibitem{Dinca}
G. Dinca, F. Isaia:
\emph{Generalized Pohozaev identity and a non-existence result for the p-Laplacian: weak solutions},
Adv. Differential Equations {\bf 14} (2009), 497--540. 

\bibitem{DF}
F. Dragoni, D. Filali:
\emph{Starshaped and convex sets in Carnot groups and in the geometries of vector fields},
Journal of Convex Analysis {\bf 26}(4) (2019).

\bibitem{DGS}
F. Dragoni, N. Garofalo, P. Salani:
\emph{Starshapedeness for fully-nonlinear equations in Carnot groups}, 
Journal of the London Mathematical Society 99(3) (2019), 901--918.

\bibitem{FallWeth}
M.M. Fall, T. Weth:
\emph{Nonexistence results for a class of fractional elliptic
boundary value problems},
J. Funct. Anal. {\bf 263}(8) (2012), 2205--2227.
  
\bibitem{FP}
M. Fogagnolo, A. Pinamonti:
\emph{Strict Starshapedness of solutions to the horizontal p-Laplacian in the Heisenberg group},
Submitted. 
  
  \bibitem{GarLanc}
  N. Garofalo, E. Lanconelli: 
  \emph{Existence and nonexistence results for semilinear differential equations 
  on the Heisenberg group}, 
  Indiana Univ. Math. J. \textbf{41} (1992), 71--98.
  
  \bibitem{GarShen}
  N. Garofalo, Z. Shen:
\emph{Absence of positive eigenvalues for a class of subelliptic operators},
Math. Ann. {\bf 304}(4) (1996), 701--715.

\bibitem{GarVas}
  N. Garofalo, D. Vassilev:
\emph{Regularity near the characteristic set in the non-linear Dirichlet problem and conformal geometry of sub-Laplacians on Carnot groups},
Math. Ann. {\bf 318}(3) (2000), 453--516.


\bibitem{Guedda}M, Guedda, L. Veron:
\emph{Quasilinear elliptic equations involving critical exponents}.
Nonlinear Anal. {\bf 1318}, (1989), 879--902.

  
  
  \bibitem{KogLanc}
  A.E. Kogoj, E. Lanconelli: 
  \emph{On semilinear $\Delta_\lambda$-Laplace equation}, 
  Nonlinear Anal. \textbf{75} (2012), 4637--4649.
  
  \bibitem{MPSC1}
  A. Maione, A. Pinamonti, F. S. Cassano:
  \emph{$\Gamma$-convergence for functionals depending on vector fields I. Integral representation and compactness},
  J. Math. Pures Appl. {\bf 9} 139 (2020), 109--142.
  
   \bibitem{MPSC2}
  A. Maione, A. Pinamonti, F. S. Cassano:
  \emph{$\Gamma$-convergence for functionals depending on vector fields II. Convergence of minimizers},
  Forthcoming.
  
  \bibitem{MV}
  A. Maione, E. Vecchi:
  \emph{Integral representation of local left-invariant functionals in Carnot groups}, 
  Anal. Geom. Metr. Spaces {\bf 8} (2020), no. 1, 1--14.
    
  \bibitem{McGough}
  J. McGough, J. Mortensen:
\emph{Pohozaev obstructions on non-starlike domains},
Calc. Var. Partial Differential Equations {\bf 18}(2) (2003), 189--205.
  
  \bibitem{Mit}
  E. Mitidieri:
\emph{A Rellich type identity and applications},
Comm. Partial Differential Equations {\bf 18} (1993), 125--151.

\bibitem{Nehari}  
  Z. Nehari:
  \emph{On a class of nonlinear second-order differential equations},
   Trans. Amer. Math. Soc., {\bf 95} (1960), 101--123.
  
  \bibitem{Niu}
P. Niu, H. Zhang, X. Luo:
\emph{Hardy's inequalities and Pohozaev's identities on the Heisenberg group}, 
Acta Math. Sinica (Chin. Ser.) {\bf 46}(2) (2003), 279--290.  
  
\bibitem{P}
A. Pinamonti:
\emph{Multiple solutions for possibly degenerate equations in divergence form},
Electronic Journal of Differential Equations, {\bf 2016} (2016).
  
  \bibitem{Poho} S. Pohozaev: 
  \emph{Eigenfunctions of the equation $\Delta u + \lambda u = 0$},
   Sov. Math. Dokl. {\bf 6} (1965), 1408--1411.
  
  \bibitem{PucSer}
  P. Pucci, J. Serrin: \emph{A general variational identity}, 
  Indiana Univ. Math. J. {\bf 35} (1986), 681--703.
  
  \bibitem{Rellich}
  F. Rellich:
\emph{Darstellung der Eigenwerte von $\Delta u+ \lambda u=0$ durch ein Randintegral},
Math. Z. {\bf 46} (1940), 635--636.
  
  \bibitem{RosSerra}
  X. Ros-Oton, J. Serra:
\emph{The Pohozaev identity for the fractional Laplacian},
Arch. Ration. Mech. Anal. {\bf 213}(2) (2014), 587--628.



\bibitem{Wagner}
A. Wagner:
\emph{Pohozaev's identity from a variational viewpoint},
J. Math. Anal. Appl. {\bf 266}(1) (2002), 149--159.

\bibitem{XuLuWang}
Q. Xu, Y. Lu, L. Wang:
\emph{Pohozaev identities from domain deformation},
Internat. J. Math. {\bf 21}(9) (2010), 1121-1134.
\end{thebibliography}
\end{document}